\documentclass[reqno, a4paper]{amsart}
\usepackage[british]{babel}
\usepackage{graphicx}
\usepackage{amsfonts}
\usepackage{amssymb}
\usepackage{amsthm}
\usepackage{amsmath}
\usepackage[all]{xy}
\usepackage{mathabx}
\usepackage{hyperref}
\usepackage[T1]{fontenc} 

\theoremstyle{plain}
\newtheorem{theorem}[subsection]{Theorem}
\newtheorem{lemma}[subsection]{Lemma}
\newtheorem{proposition}[subsection]{Proposition}
\newtheorem{corollary}[subsection]{Corollary}

\theoremstyle{definition}
\newtheorem{definition}[subsection]{Definition}
\newtheorem{remark}[subsection]{Remark}
\newtheorem{notation}[subsection]{Notation}
\newtheorem{example}[subsection]{Example}
\newtheorem{convention}[subsection]{Convention}

\newcommand{\defn}{\textbf}

\newcommand{\comp}{\raisebox{0.2mm}{\ensuremath{\scriptstyle{\circ}}}}
\newcommand{\del}{\partial}
\renewcommand{\implies}{$\Rightarrow$}

\newcommand{\iso}{\cong}
\newcommand{\noproof}{\hfill \qed}
\newcommand{\plus}{$^{\textrm{+}}$}

\renewcommand{\H}{\ensuremath{\mathrm{H}}}

\renewcommand{\ker}{\ensuremath{\mathrm{Ker\,}}}
\newcommand{\K}{\ensuremath{\mathrm{K}}}

\newcommand{\op}{\ensuremath{\mathrm{op}}}
\newcommand{\R}{\ensuremath{\mathrm{R}}}

\newcommand{\A}{\ensuremath{\mathcal{A}}}
\newcommand{\B}{\ensuremath{\mathcal{B}}}

\newcommand{\D}{\ensuremath{\mathcal{D}}}
\newcommand{\E}{\ensuremath{\mathcal{E}}}
\newcommand{\F}{\ensuremath{\mathcal{F}}}
\renewcommand{\P}{\ensuremath{\mathcal{P}}}

\newcommand{\Sq}{\ensuremath{\mathsf{S}^{+}_{q}\!}}
\newcommand{\Ss}{\ensuremath{\mathsf{S}^{+}_{s}\!}}

\renewcommand{\AA}{\ensuremath{\mathbb{A}}}
\newcommand{\BB}{\ensuremath{\mathbb{B}}}
\newcommand{\GG}{\ensuremath{\mathbb{G}}}
\newcommand{\ZZ}{\ensuremath{\mathbb{Z}}}

\newcommand{\Ab}{\ensuremath{\mathsf{Ab}}}
\newcommand{\Arr}{\ensuremath{\mathsf{Arr}}}
\newcommand{\Arrn}{\ensuremath{\mathsf{Arr}^{n}\!}}
\newcommand{\Arrnn}{\ensuremath{\mathsf{Arr}^{n+1}\!}}

\newcommand{\Cub}{\ensuremath{\mathsf{Cub}}}
\newcommand{\Cubn}{\ensuremath{\mathsf{Cub}^n\!}}
\newcommand{\Cubnn}{\ensuremath{\mathsf{Cub}^{n+1}\!}}

\newcommand{\Ext}{\ensuremath{\mathsf{Ext}}}
\newcommand{\Extn}{\ensuremath{\mathsf{Ext}^{n}\!}}
\newcommand{\Fun}{\ensuremath{\mathsf{Fun}}}
\newcommand{\Gp}{\ensuremath{\mathsf{Gp}}}

\newcommand{\Loop}{\ensuremath{\mathsf{Loop}}}

\newcommand{\Top}{\ensuremath{\mathsf{Top}}}

\newcommand{\arr}{\ensuremath{\mathsf{arr}}}

\newcommand{\gp}{\ensuremath{\mathsf{gp}}}

\hyphenation{cat-e-go-ri-cal cat-e-go-ries e-quiv-a-len-ces e-quiv-a-len-ce co-ker-nels gen-er-al-ised ex-act-ness ex-ten-sion Ja-ne-lid-ze pro-jec-tive nil-po-tent co-ker-nel a-sphe-ri-cal group-oid al-ter-na-ting semi-lat-tice semi-lat-tices e-quiv-a-lent abe-li-an-i-sa-tion com-mu-ta-tor com-mu-ta-tors prop-o-si-tion cen-tral def-i-ni-tion re-sult re-sults di-rec-tion di-rec-tions com-po-nent com-po-nents de-nom-i-na-tor in-ter-pre-ta-tion di-groups sub-sec-tion cen-tral-i-ty co-ho-mo-lo-gy di-men-sion-al be-tween de-ter-mined de-ter-mine op-er-a-tor op-er-a-tors al-ge-bras can-di-dates ma-trix mul-ti-pli-ca-tions na-tu-ral-ity mul-ti-pli-ca-tive de-ter-mine de-ter-mines more-o-ver sim-i-lar-ly ap-pli-ca-tions ad-mis-si-ble con-ju-ga-tion com-mu-ta-tive co-prod-ucts as-so-ci-a-tor as-so-ci-a-tors com-mu-ta-tive com-mu-ta-ti-vi-ty de-nor-mal-i-sa-tion as-so-ci-a-tive ar-ti-cle ar-ti-cles cor-re-spond cor-re-spond-ence mor-phism mo-no-mor-phism ho-mo-mor-phism epi-mor-phism ef-fec-tive pres-en-ta-tion Lou-vain}

\newdir{>}{{}*:(1,-.2)@^{>}*:(1,+.2)@_{>}}
\newdir{<}{{}*:(1,+.2)@^{<}*:(1,-.2)@_{<}}

\newbox\pullbackbox
\setbox\pullbackbox=\hbox{\xy 0;<1mm,0mm>: \POS(4,0)\ar@{-} (0,0) \ar@{-} (4,4)
\endxy}
\def\pullback{\copy\pullbackbox}

\newbox\pushoutbox
\setbox\pushoutbox=\hbox{\xy 0;<1mm,0mm>: \POS(0,4)\ar@{-} (0,0) \ar@{-} (4,4)
\endxy}

\begin{document}

\author{Tomas Everaert}
\author{Julia Goedecke}
\author{Tim Van~der Linden}

\address{Vakgroep Wiskunde, Vrije Universiteit Brussel, Pleinlaan 2, 1050 Brussel, Belgium\newline \indent Institut de recherche en math\'ematique et physique, Universit\'e catholique de Louvain, chemin du cyclotron~2 bte L7.01.02, 1348 Louvain-la-Neuve, Belgium\newline \indent Queens' College, University of Cambridge, United Kingdom\newline \indent 
Centro de Matem\'atica da Universidade de Coimbra, 3001--454 Coimbra, Portugal}

\email{teveraer@vub.ac.be}
\email{julia.goedecke@cantab.net}
\email{tim.vanderlinden@uclouvain.be}

\thanks{The first author's research was supported by Fonds voor Wetenschappelijk Onderzoek (FWO-Vlaanderen). The second author's research was supported by the FNRS grant \emph{Cr\'edit aux chercheurs} 1.5.016.10F. The third author works as \emph{charg\'e de recherches} for Fonds de la Recherche Scientifique--FNRS. His research was supported by Funda\c c\~ao para a Ci\^encia e a Tecnologia (grant number SFRH/BPD/38797/2007) and by CMUC at the University of Coimbra}

\title[Resolutions, higher extensions and the relative Mal'tsev axiom]{Resolutions, higher extensions and\\ the relative Mal'tsev axiom}

\begin{abstract}
We study how the concept of higher-dimensional extension which comes from categorical Galois theory relates to simplicial resolutions. For instance, an augmented simplicial object is a resolution if and only if its truncation in every dimension gives a higher extension, in which sense \emph{resolutions are infinite-dimensional extensions} or \emph{higher extensions are finite-dimensional resolutions.} We also relate certain stability conditions of extensions to the Kan property for simplicial objects. This gives a new proof of the fact that a regular category is Mal'tsev if and only if every simplicial object is Kan, using a relative setting of extensions. 
\end{abstract}

\keywords{higher extension; simplicial resolution; Mal'tsev condition; relative homological algebra}

\subjclass[2010]{18A20, 18E10, 18G25, 18G30, 20J}

\dedicatory{Dedicated to James Gray and Tamar Janelidze on the occasion of their wedding}

\maketitle

\section*{Introduction}\label{Section-Introduction}
The concept of \emph{higher-dimensional extension} first appeared in the approach to non-abelian homological algebra based on categorical Galois theory in semi-abelian categories. In that context \emph{centrality} of higher extensions plays a very important role, but we do not treat this aspect in the current paper. We rather focus on stability conditions of the higher extensions themselves.

A major point of this article is that certain properties of simplicial objects and simplicial resolutions are actually properties of the induced cubes and higher extensions. As a consequence, some proofs (see, for instance, Proposition~\ref{Proposition-Relative-Maltsev-Converse}) which are rather technical when considered from the simplicial point of view become almost trivial when higher extensions are used instead.

\subsection*{Background on higher (central) extensions}
Already the article~\cite{Brown-Ellis} written by R.~Brown and G.~J.~Ellis on higher Hopf formulae for groups was based on a notion of higher extension of groups. Following G.~Janelidze's ideas set out in~\cite{Janelidze:Double, Janelidze:Hopf-talk} and extending the theory from~\cite{Janelidze-Kelly}, \emph{higher-di\-men\-sion\-al central extensions} were introduced alongside a general categorical concept of \emph{higher-dimensional extension} in~\cite{EGVdL} to study homology in semi-abelian categories. The theory presented there allows for an interpretation of the canonical comonadic homology objects induced by the reflection of a semi-abelian variety to a subvariety in terms of (higher) Hopf formulae, generalising those obtained in~\cite{Brown-Ellis} to contexts beyond the case of abelianisation of groups. For instance, if~$B$ is a loop and $B\cong A/K$ a projective presentation of~$B$, then
\[
\H_{2}(B,\gp)\cong\frac{K\cap [A,A,A]}{[K,A,A]},
\]
where $\H_{2}(B,\gp)$ is the second homology of $B$ relative to the category of groups (i.e., with coefficients in the reflector $\gp\colon{\Loop\to\Gp}$) and the brackets on the right hand side are associators~\cite{EverVdL4}.

The article~\cite{EGVdL} gives calculations of the homology objects for groups vs.\ abelian groups, rings vs.\ zero rings, precrossed modules vs.\ crossed modules, Lie algebras vs.\ modules, groups vs.\ groups of a certain nilpotency or solvability class, etc., in all dimensions. This approach to homology was extended to cover other examples~\cite{Everaert-Gran-TT,Everaert-Gran-nGroupoids} and several theoretical perspectives were explored: slightly different approaches~\cite{EverHopf, Tomasthesis, Janelidze:Hopf}, links with relative commutator theory~\cite{EveraertCommutator, EGAM, EverVdL4, EverVdLRCT}, first steps towards an interpretation of cohomology~\cite{Gran-VdL, RVdL}, the characterisation of higher central extensions~\cite{EverVdL3}, and satellites~\cite{Juliathesis, GVdL2}.

This gives an indication of the importance of higher central extensions in non-abelian homological algebra, in particular in homology and cohomology of non-abelian algebraic objects. However, they could not exist without higher extensions themselves, and in this paper we examine certain stability conditions that higher extensions may have. This leads to strong results on simplicial objects, which of course also play an important role in the study of homology.

\subsection*{Higher extensions} 
Classically, one-dimensional extensions are just regular epimorphisms in a regular category $\A$, which, in the varietal case, are exactly the surjections. Denoting by $\E$ the class of extensions in $\A$, a \defn{double extension} is a commutative square 
\[ 
\vcenter{\xymatrix{A_1 \ar[r]^-{f_1} \ar[d]_-{a} & B_1 \ar[d]^-{b}\\ 
A_0 \ar[r]_-{f_0} & B_0}} 
\] 
in $\A$ where the morphisms $a$, $b$, $f_1$, $f_0$ and the universally induced  morphism $\langle a,f_1\rangle\colon{A_1\to A_0\times_{B_0} B_1}$ to the pullback of $b$ and $f_0$ are in $\E$. We denote the class of double extensions thus obtained by $\E^{1}$. Of course this definition does not depend on the exact nature of 
one-dimensional extensions, so it can be used for any (reasonable) class 
of morphisms $\E$. In particular, it can be iterated to give $n$-fold 
extensions for any $n\geq 2$: then all the arrows in the induced diagram are $(n-1)$-fold extensions. We write $\Ext\A$ for the full subcategory of the 
category of arrows $\Arr\A$ in $\A$ determined by the extensions, and 
similarly $\Extn\A$ for the full subcategory of the category of $n$-fold 
arrows $\Arrn\A=\Arr\Arr^{n-1}\!\A$ determined by the $n$-fold extensions. We denote the class of $(n+1)$-fold extensions by $\E^{n}$. By treating extensions axiomatically, as described below, we can deal with the pairs~$(\Extn\A,\E^n)$ just like the ``base case'' $(\A,\E)$, since such a pair is just another example of a category with a class of extensions. This makes the statements and proofs of many results much easier, and also clarifies in which other situations the results may hold.

\subsection*{Axioms for extensions}
Treating extensions axiomatically (rather than ad hoc, as in~\cite{EGVdL}) has the following advantage. Because the set of axioms is such that it ``goes up'' to higher dimensions, as first formulated by T.~Everaert in \cite{Tomasthesis} and~\cite{EverHopf}, it allows a simultaneous treatment of extensions in all dimensions without having to remember which dimension is currently needed. 

The main list of axioms for a class of extensions~$\E$ in a category~$\A$ considered in this paper is the following.
\begin{enumerate}
\item[(E1)] $\E$ contains all isomorphisms;
\item[(E2)] pullbacks of morphisms in $\E$ exist in $\A$ and are in $\E$;
\item[(E3)] $\E$ is closed under composition;
\item[(E4)] if  $g\comp f\in \E$ then $g\in\E$ (right cancellation);
\item[(E5)] the \defn{$\E$-Mal'tsev axiom}: any split epimorphism of extensions
\[
\xymatrix{A_1 \ar@<.5ex>[r]^-{f_1} \ar[d]_-{a} & B_1 \ar[d]^-{b} \ar@<.5ex>[l]\\
A_0 \ar@<.5ex>[r]^-{f_0} & B_0 \ar@<.5ex>[l]}
\]
in $\A$ is a double extension.
\end{enumerate}
These axioms come in slightly different flavours and are not all treated at once. The first three, (E1)--(E3), go up to higher dimensions without help of the others and already imply the important fact that \emph{higher extensions are symmetric}. Axioms (E1)--(E5) are the setting of Section~\ref{Section-Relative-Maltsev}. In fact, (E5) is equivalent to~(E4) applied to $(\Ext\A,\E^1)$ and implies axiom (E4) for $(\A,\E)$. The axiom (E5) in its absolute form comes from D.~Bourn's~\cite{Bourn2003}; see also~\cite{Bourn1996}.

\subsection*{Resolutions vs.\ extensions}
In Section~\ref{Section-Resolutions-and-Extensions} we assume that the pair $(\A,\E)$ satisfies axioms (E1)--(E3). We compare higher extensions satisfying these axioms to \emph{simplicial $\E$-resolutions}, which are augmented simplicial objects in which all comparison morphisms to the simplicial kernels are morphisms in~$\E$. Truncating an augmented simplicial object induces higher dimensional arrows, and we prove in Theorem~\ref{Theorem-Resolution-is-Cube} that the augmented simplicial object is an $\E$-resolution if and only if each of these truncations gives rise to a higher dimensional extension. In this sense
\begin{center}
\textit{resolutions are infinite-dimensional extensions}\\
or\\
\textit{higher extensions are finite-dimensional resolutions}.
\end{center}
This is, in fact, also how they are used in practice, for example in~\cite{Brown-Ellis} or~\cite{EGVdL}.

\subsection*{The Kan property and Mal'tsev conditions}
In Section~\ref{Section-Relative-Maltsev} we work with a pair~$(\A,\E)$ satisfying (E1)--(E5). In fact, under (E1)--(E4) we prove that 
\begin{center}
\textit{(E5) holds $\Leftrightarrow$ every simplicial object in $\A$ is $\E$-Kan}
\end{center}
(Theorem~\ref{Theorem-Relative-Maltsev}). This justifies calling (E5) the relative Mal'tsev axiom, as it is well known that a regular category $\A$ is Mal'tsev if and only if every simplicial object in $\A$ is Kan \cite[Theorem~4.2]{Carboni-Kelly-Pedicchio}. As a first indication on the usefulness of the relative Kan property we prove that
\begin{center}
\textit{contractible + $\E$-Kan \implies\ $\E$-resolution} 
\end{center}
for augmented $\E$-simplicial objects (Proposition~\ref{Proposition-Contractible-plus-Kan-is-Resolution}). Here an $\E$-simplicial object is one in which all faces $\del_i$ are extensions, and such an object is $\E$-Kan when all comparison morphisms to the universal horn objects are in $\E$.

\subsection*{Weaker conditions on extensions}
Axioms (E1) and (E4) together imply that all split epimorphisms are extensions. However, this is not the case in all examples of interest, as for instance T.~Janelidze's \emph{relative homological} and \emph{relative semi-abelian categories}~\cite{Tamar_Janelidze,Tamar_Janelidze_Semiabelian}. The connection with her work will be developed in a forthcoming article.

\section{Axioms for extensions}\label{Section-Axioms-for-Extensions}
We treat the concept of \emph{higher-dimensional extension}~\cite{Tomasthesis, EverHopf, EGVdL} in an axiomatic manner, recalling the basic definitions and proving fundamental properties: symmetry, and the axioms of extensions going up to higher dimensions (Proposition~\ref{Proposition-Extensions}).

\subsection*{Higher-dimensional arrows}
To understand higher extensions, we must first define what we mean by a \emph{higher-dimensional arrow}. As these play a very important role throughout the paper, we shall take some time to really understand these objects.

To set up a convenient numbering system for our higher-dimensional arrows, we consider the natural numbers by their standard (von Neumann) construction and write $0=\emptyset$ and $n=\{0,\dots, n-1\}$ for $n\geq 1$. We write $2^{n}$ for the power-set of~$n$. Recall that $2^{n}$ is a category with an arrow $S\to T$ for each inclusion $S\subseteq T$ of subsets $S$, $T\subseteq n$. Clearly $2^{1}=2$, the category generated by a single morphism ${0\to 1}$, is an obvious ``template'' for an arrow in a category.

\begin{definition}
The category $\Arrn\A$ consists of \defn{$n$-dimensional arrows} in~$\A$: $\Arr^{0}\!\A=\A$, $\Arr^{1}\!\A=\Arr\A$ is the category of arrows $\Fun(2^{\op},\A)=\A^{2^{\op}}$, and $\Arrnn\A=\Arr\Arrn\A$.
\end{definition}

\begin{example}
A zero-fold arrow is just an object of $\A$, a one-fold arrow is given by an arrow in $\A$, while a two-fold arrow $A$ is a commutative square in~$\A$ with a specified direction:
\begin{equation}\label{Numbering}
\vcenter{\xymatrix{A_{2} \ar[r]^{a_1} \ar[d]_{a_0} \ar@{}[dr]|{\Rightarrow} & A_{1} \ar[d]^{a_0^{1}} \\
A_{\{1\}} \ar[r]_{a_0^{\{1\}}} & A_{0}.}}
\end{equation}
This particular numbering of the objects and arrows will become clear below, after Definition~\ref{Definition-n-cube}. Similarly an $n$-fold arrow is a commutative $n$-cube in~$\A$ with specified directions. By definition a morphism (a natural transformation) between $n$-fold arrows is also an $(n+1)$-fold arrow.
\end{example}

Notice that, by induction, we have an isomorphism
\[
\Arrn\A\iso \A^{2^{\op}\times\cdots\times2^{\op}}.
\]
However, in the step which says that ``a functor ${2^{\op}\to\A^{2^{\op}}}$ corresponds to a functor ${2^{\op}\times 2^{\op}\to \A}$'' (and the higher versions of this) we may easily lose sight of the direction of the arrow, as $2^{\op}\times 2^{\op}$ is of course symmetric. This leads to the concept of the \emph{$n$-cube} corresponding to an $n$-fold arrow, which we shall make more precise and connect to the issue above. Later we shall see that distinguishing between a cube and an arrow with directions is often not as important for our purposes as it may first seem.

\begin{definition}\label{Definition-n-cube}
Let $n\geq 0$. We define an \defn{$n$-cube} in $\A$ to be a functor 
\[
A\colon (2^{n})^{\op}\to \A.
\]
A morphism between two $n$-cubes $A$ and $B$ in $\A$ is a natural transformation $f\colon {A\to B}$. We write $\Cubn\A$ for the corresponding category.
\end{definition}

Thus an $n$-cube is a diagram of a specified shape in $\A$. Clearly a zero-cube is just an object of $\A$ and a one-cube is a morphism in $\A$, so we have $\Cub^0\!\A=\Arr^0\!\A=\A$ and $\Cub^1\!\A=\Arr\A$. A two-cube is a commutative diagram as above, but (a priori) without a specified direction. 

Notice that $2\times2\iso 2^{2}$ and similarly $2\times2^{n}\iso2^{n+1}$, but these isomorphisms are not unique. Roughly speaking, the extra $1$ can be inserted either ``at the bottom'' or ``at the top'' or even ``somewhere in the middle'', and this determines how the new object is numbered. From the existence of these isomorphisms we see that we can view every $n$-fold arrow as an $n$-cube, by replacing the directions with a specific numbering, and that the two categories $\Arrn\A$ and $\Cubn\A$ are isomorphic---but there are several possible isomorphisms which reflect the different ways a direction may correspond to the numbering of the objects. Also, a morphism between $n$-cubes can be viewed as an $(n+1)$-cube. Conversely an $(n+1)$-cube can be considered as an arrow between $n$-cubes in $n+1$ different ways. 

Having chosen one of the isomorphisms ${\Arrn\A\to \Cubn\A}$ mentioned above, we may number an $n$-fold arrow by viewing it as an $n$-cube. If $A$ is an $n$-fold arrow and $S$ and $T$ are subsets of $n$ such that $S\subseteq T$, we write $A_S$ for the image $A(S)$ of $S$ by the functor $A$ and $a_S^T\colon A_T\to A_S$ for the image $A(S\subseteq T)$ of $S\subseteq T$. If~$f\colon {A\to B}$ is a morphism between $n$-fold arrows, we write $f_S\colon A_S\to B_S$ for the $S$-component of the natural transformation~$f$. Moreover, in order to simplify our notations, we write $a_i$ instead of $a^n_{n\backslash \{i\}}$, for $0\leq i \leq n-1$. (See the picture of a double extension~\eqref{Numbering} above for an example.)

\begin{convention}\label{Convention-Iso-Cubes-Arrows}
As mentioned above, there are several different isomorphisms between $\Cubn\A$ and $\Arrn\A$. We now describe one of these and we shall use this one throughout the paper. Given an $n$-cube $A\colon {(2^{n})^{\op}\to\A}$, we see that each edge or one-fold arrow in $A$ is of the form $A_{S\cup\{i\}} \to A_S$ for some $i\in n$ and some subset $S\subset n$ not containing $i$. All edges of this form with the same $i$ are ``parallel'' in the $n$-cube. Thus for each $k$-cube inside $A$, we choose the direction to be that which corresponds to the largest such $i$. As an example, consider the following cube.
\[
\xymatrix@!0@=2.5em{& A_{\{2,0\}} \ar[rr] \ar@{.>}[dd] && A_1 \ar[dd] \\
A_3 \ar[rr] \ar[dd] \ar[ru] && A_2 \ar[dd] \ar[ru]\\
& A_{\{2\}} \ar@{.>}[rr] && A_0\\
A_{\{1,2\}} \ar[rr] \ar@{.>}[ru] && A_{\{1\}} \ar[ru]}
\]
Going from left to right is the direction of ``leaving out $2$'', from front to back is ``leaving out $1$'' and from top to bottom is ``leaving out $0$''. Therefore the right and left square go from front to back, the front, back, top and bottom squares all go from left to right, and the whole cube also goes from left to right. 
\end{convention}

Proposition~\ref{Proposition-DIP-Extension} will show us that remembering the specified directions of an $n$-fold arrow is often not necessary, so that we are mostly happy to use $n$-cubes and $n$-fold arrows synonymously without specifying the isomorphism between them.

\subsection*{Extensions}
We now consider a class of morphisms $\E$ in a category $\A$ satisfying the following axioms:
\begin{enumerate}
\item[(E1)] $\E$ contains all isomorphisms;
\item[(E2)] pullbacks of morphisms in $\E$ exist in $\A$ and are in $\E$;
\item[(E3)] $\E$ is closed under composition.
\end{enumerate}
Given such a class $\E$, we write $\E^{1}$ for the class of arrows $(f_1,f_0)\colon{a\to b}$
\[\vcenter{\xymatrix{A_1 \ar@{}[rd]|{\Rightarrow} \ar[r]^-{f_1} \ar[d]_-{a} & B_1 \ar[d]^-{b}\\A_0 \ar[r]_-{f_0} & B_0}}\]
in $\Arr\A$ such that all arrows in the induced diagram
\[
\vcenter{\xymatrix{A_1 \ar@/^/[rrd]^-{f_1} \ar@/_/[ddr]_-{a} \ar@{.>}[rd] \\
& P \ar@{.>}[r] \ar@{.>}[d] \ar@{}[rd]|<{\pullback} & B_1 \ar[d]^-{b}\\
& A_0 \ar[r]_-{f_0} & B_0}}
\]
are in $\E$. We write $\Ext\A$ for the full subcategory of $\Arr\A$ determined by the arrows in $\E$. 

\begin{remark} 
The pullback in the diagram above exists as we assume that $b$ and~$f_0$ are in $\E$, and (E1) ensures that there is no ambiguity in the choice of pullback.
\end{remark}

\begin{proposition}\label{Proposition-Extensions}
Let $\A$ be a category and $\E$ a class of arrows in $\A$. If~$(\A,\E)$ satisfies (E1)--(E3),
then $(\Ext\A,\E^{1})$ satisfies the same conditions.
\end{proposition}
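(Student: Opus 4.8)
The plan is to use that $\E^{1}$ consists exactly of the double extensions, so that a commutative square $(f_1,f_0)\colon a\to b$ lies in $\E^{1}$ precisely when $b$ and $f_0$ lie in $\E$ and the comparison map $A_1\to A_0\times_{B_0}B_1$ to the pullback lies in $\E$ (the remaining maps $a$, $f_1$ and the two projections then follow automatically from (E1)--(E3)). The whole argument rests on a single recurring principle: every comparison map that arises---whether from a composite of squares or from a pullback of squares---can be exhibited as a composite of an $\E$-map with a \emph{pullback} of an already-available $\E$-map, so that (E2) and (E3) for $(\A,\E)$ finish the job. I would check the axioms in the order (E1), (E3), (E2). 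For (E1), an isomorphism of $\Ext\A$ is a square $(f_1,f_0)$ with $f_0$, $f_1$ isomorphisms and $a,b\in\E$; then $b\in\E$ by hypothesis, $f_0\in\E$ by (E1), and a direct check shows that when $f_1$ is an isomorphism the comparison map $A_1\to A_0\times_{B_0}B_1$ is itself an isomorphism, hence in $\E$ by (E1).

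For (E3), suppose $(f_1,f_0)\colon a\to b$ and $(g_1,g_0)\colon b\to c$ are double extensions, with comparison maps $\phi\colon A_1\to A_0\times_{B_0}B_1$ and $\psi\colon B_1\to B_0\times_{C_0}C_1$ in $\E$, and consider the comparison $\chi\colon A_1\to A_0\times_{C_0}C_1$ of the composite square. The key computation is that $A_0\times_{B_0}B_1$ is the pullback of $\psi$ along the map $A_0\times_{C_0}C_1\to B_0\times_{C_0}C_1$ induced by $f_0$, and that under this identification $\chi=\pi\comp\phi$, where $\pi\colon A_0\times_{B_0}B_1\to A_0\times_{C_0}C_1$ is the resulting pullback projection. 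Thus $\pi\in\E$ by (E2) and $\chi\in\E$ by (E3); together with $c\in\E$ and $g_0\comp f_0\in\E$ (by (E3)) this shows the composite lies in $\E^{1}$.

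For (E2), let $(f_1,f_0)\colon a\to b$ be a double extension and $(g_1,g_0)\colon c\to b$ any morphism of $\Ext\A$. Since all components of a double extension are in $\E$, axiom (E2) for $(\A,\E)$ supplies the pointwise pullbacks $D_1=A_1\times_{B_1}C_1$ and $D_0=A_0\times_{B_0}C_0$, which assemble into the pullback $d\colon D_1\to D_0$ of $a\to b$ along $c\to b$ in the functor category $\Arr\A$. It then remains to show that the projection $(p_1,p_0)\colon d\to c$ is a double extension; this simultaneously forces the apex $d$ into $\E$, so that the square is a genuine pullback in the full subcategory $\Ext\A$. Here $c\in\E$; the projection $p_0$ is a pullback of $f_0\in\E$ and hence in $\E$; and the comparison $\delta\colon D_1\to D_0\times_{C_0}C_1$ can be identified, after the simplification $D_0\times_{C_0}C_1\iso A_0\times_{B_0}C_1$, with the projection of the pullback of the comparison map $\phi$ of $a\to b$ along the map induced by $g_1$. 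Hence $\delta\in\E$ by (E2), and since $d$ factors as this comparison followed by a projection of an $\E$-map, $d\in\E$ by (E3).

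I expect the main obstacle to be organisational rather than conceptual: in both (E3) and (E2) one must correctly recognise the relevant pullback squares inside the ambient $3\times 3$ (respectively cube-shaped) diagram---for instance that $A_0\times_{B_0}B_1$ is a pullback of $\psi$, or that $D_1$ is a pullback of $\phi$---and keep careful track of the canonical isomorphisms between iterated pullbacks that are being invoked. Once this pullback-pasting bookkeeping is set up correctly, each of the three verifications collapses immediately onto axioms (E1)--(E3) for the base pair $(\A,\E)$.
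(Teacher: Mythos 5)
Your proof is correct, and it is worth pointing out that the paper itself contains no argument to compare it against: the paper's entire proof is the remark that the proof of \cite[Proposition~3.5]{EGVdL} (where the statement concerns regular epimorphisms) ``may be copied mutatis mutandis''. Your write-up is exactly the adaptation that the authors leave to the reader, and it confirms that only (E1)--(E3) are ever invoked, which is the whole point of the axiomatic setting. Your organising reduction---a square $(f_1,f_0)\colon{a\to b}$ between extensions lies in $\E^{1}$ precisely when $b$, $f_0$ and the comparison $\langle a,f_1\rangle$ lie in $\E$, the remaining arrows being handled by (E2) and (E3)---is the right one, and both of your key identifications are correct applications of the pasting lemma for pullbacks: for (E3), pasting the square defining $A_0\times_{B_0}B_1$ onto the square exhibiting $A_0\times_{C_0}C_1$ as a pullback of the projection ${B_0\times_{C_0}C_1\to B_0}$ along $f_0$ shows that $\pi$ is a pullback of $\psi$ and that $\chi=\pi\comp\phi$; for (E2), the analogous pasting shows that $\delta$ is a pullback of $\phi$, whence $d=\pi_{D_0}\comp\delta\in\E$ by (E3), so the degreewise pullback has its vertex in $\Ext\A$ and is therefore a pullback in that full subcategory (this is the content of the paper's remark that pullbacks of double extensions are computed degreewise).

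One small repair is needed in your (E1) step: it is not true that $f_1$ being an isomorphism alone forces the comparison $\langle a,f_1\rangle$ to be an isomorphism. For instance, with $\E$ the class of all maps of $\Set$, take $A_1=B_1=B_0$ a singleton and $A_0$ a two-element set; then $f_1$ is an identity but the comparison ${A_1\to A_0\times_{B_0}B_1\iso A_0}$ is a non-invertible monomorphism. What you need---and what holds for an isomorphism of $\Ext\A$, since isomorphisms in the full subcategory of $\Arr\A$ are pointwise---is that $f_0$ \emph{and} $f_1$ are both invertible: then the projection ${A_0\times_{B_0}B_1\to B_1}$ is an isomorphism, being a pullback of $f_0$, and the comparison is its inverse composed with $f_1$. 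With that adjustment, all three verifications are complete.
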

\begin{proof}
Mutatis mutandis the proof of~\cite[Proposition~3.5]{EGVdL} may be copied.
\end{proof}

\begin{remark}
Pullbacks of double extensions in $\Ext\A$ are computed degree-wise as in $\Arr\A$.
\end{remark}

\begin{remark}
Notice that these axioms have a slightly different appearance to their corresponding ones in~\cite{EverHopf}: there it is important to keep track of the objects which can occur as domains or codomains of extensions. The class of these objects is called $\E^{-}$ and does not occur here because using $(\Ext\A,\E^{1})$ instead of~$(\Arr\A,\E^{1})$ automatically restricts us to the right domains and codomains. In~\cite{EverHopf} this extra care is needed because the construction of the (higher) centralisation functors depends on the categories $\Arrn\A$ being semi-abelian (for~${n\geq 0}$).  Note that, while $\A$ being semi-abelian implies that $\Arr\A$ is semi-abelian, in general~$\Ext\A$ does not keep this property (see, for instance, Section 3 of~\cite{EGVdL}).
\end{remark}

\begin{definition}
If $(\A,\E)$ satisfies (E1)--(E3) then an element of $\E$ is called a \defn{(one-fold) extension} of $\A$ and an element of $\E^1$ a \defn{two-fold extension} or \defn{double extension} of $\A$. We also write $\E^{0}$ for~$\E$. By induction, we obtain a class of arrows $\E^{n}=(\E^{n-1})^{1}$ in $\Arrn\A$ and a full subcategory $\Extn\A$ of $\Arrn\A$ (determined by the elements of $\E^{n-1}$) for all $n\geq 2$. An object of $\Extn\A$ (= an element of $\E^{n-1}$) is called an \defn{$n$-fold extension} of~$\A$. We shall sometimes talk about \defn{$n$-fold $\E$-extensions} or simply of \defn{extensions}.
\end{definition}

\begin{remark}
Notice that, when $\A$ has finite products, (E2) and (E3) imply that the product $f\times g$ of two extensions $f$ and $g$ is also an extension (to see this, observe that $f\times g=(f\times 1)\comp (1\times g)$).
\end{remark}

\begin{example}[Regular epimorphisms]\label{Example-Regular}
If $\A$ is a regular category (finitely complete with coequalisers of kernel pairs and pullback-stable regular epimorphisms, see~\cite{Barr-Grillet-vanOsdol}) and $\E$ is the class of regular epimorphisms in~$\A$, then the pair~$(\A,\E)$ satisfies conditions (E1)--(E3). Indeed, any isomorphism is a regular epimorphism, and regular epimorphisms are pullback-stable and closed under composition. The higher extensions obtained here are the ones considered in~\cite{EGVdL}.
\end{example}

\begin{example}[Projective classes]\label{Example-Projective-Class}
Let $\A$ be a finitely complete category. Recall that a \defn{projective class} on $\A$ is a pair $(\P ,\E)$, where $\P$ is a class of objects of $\A$ and $\E$ a class of morphisms of $\A$, such that $\P$ consists of all $\E$-projectives~$P$, the class $\E$ consists of all $\P$-epimorphisms $f$, and $\A$ has enough $\E$-projectives. 
\[
\xymatrix{& A \ar[d]^-{f} \\
P \ar@{.>}[ru]^-{\exists} \ar[r]_-{\forall} & B}
\]
It is easily seen that $(\A,\E)$ satisfies (E1)--(E3).

Clearly, when $\A$ is a regular category with enough regular projective objects and~$\E$ is the class of regular epimorphisms in $\A$, we regain Example~\ref{Example-Regular}. 

An extreme case is given by taking $\P$ to be the class of all objects of $\A$, so that~$\E$ consists of all split epimorphisms; see also Example~\ref{Example-Naturally-Maltsev}.
\end{example}

\begin{example}[Effective descent morphisms]\label{Example-Effective-Descent}
Let $\A$ be a category with pullbacks, $B$ an object of $\A$ and write $(\A\downarrow B)$ for the slice category over $B$. Given a morphism $p\colon{E\to B}$ in $\A$, let $p^{*}\colon {(\A\downarrow B)\to (\A\downarrow E)}$ be the change of base functor induced by pulling back along $p$. Then $p$ is an \defn{effective descent morphism} if and only if the functor $p^{*}$ is monadic.

Let now $\E$ be the class of effective descent morphisms in $\A$. Then $(\A,\E)$ satisfies (E1)--(E3), and also the axiom~(E4), which we will meet in Section~\ref{Section-Relative-Maltsev}; see e.g.~\cite{Janelidze-Sobral-Tholen}.
\end{example}

\begin{example}[Etale maps]\label{Example-Etale-maps}
Recall that an \'etale map is the same as a local homeomorphism of topological spaces: a continuous map $f\colon{A\to B}$ such that for any element $a\in A$ there is an open neighbourhood $U$ of $a$ such that $f(U)$ is open in $B$ and the restriction of $f$ to a map ${U\to f(U)}$ is a homeomorphism. Taking $\A$ to be the category $\Top$ of topological spaces and $\E$ the class of \'etale maps, it is well known and easily verified that (E1)--(E3) hold for $(\A,\E)$; see for instance~\cite{MacLane-Moerdijk}.
\end{example}

\begin{example}[Topological groups]\label{Example-Topological-Groups}
Let $\Gp\Top$ be the category of topological groups. Since this category is regular, Example~\ref{Example-Regular} implies that $(\Gp\Top,\E)$ satisfies (E1)--(E3) when $\E$ is the class of all regular epimorphisms, which are well known to be the open surjective homomorphisms. 

Another choice of $\E$ would be the class of morphisms which are split as morphisms in the category of topological spaces. It is easy to check that $(\A,\E)$ satisfies the axioms (E1)--(E3). Similarly the category of topological groups together with all morphisms which are split as morphisms of groups satisfies (E1)--(E3). These two examples have been considered important elsewhere in the literature, see for instance \cite{Hochschild-Wigner} and \cite{Wigner-Topological-Groups} (cf.~also \cite[Example 3.3.3]{Tamar_Janelidze}). In fact, both these examples also satisfy the axiom (E4). More examples of this kind are the category of rings together with morphisms which are split in the category of abelian groups $\Ab$ and the category of $R$-modules for a ring $R$, with morphisms which are split in $\Ab$. 
\end{example}

There is an alternative way of looking at extensions which is inspired by~\cite{Brown-Ellis} and~\cite{Donadze-Inassaridze-Porter}.

\begin{proposition}\label{Proposition-DIP-Extension}
Given any $n$-fold arrow $A$, the following are equivalent:
\begin{enumerate}
\item $A$ is an extension;
\item for all $\emptyset\neq I\subseteq n$, the limit $\lim_{J\subsetneq I} A_{J}$ exists and the induced morphism ${A_{I}\to\lim_{J\subsetneq I} A_{J}}$ is in $\E$. 
\end{enumerate}
\end{proposition}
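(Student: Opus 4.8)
The plan is to argue by induction on $n$, reducing the inductive step to the case $n=2$ applied to the pair $(\Ext^{n-2}\A,\E^{n-2})$, which satisfies (E1)--(E3) by iterating Proposition~\ref{Proposition-Extensions}. Throughout, write $\eta_{I}\colon A_{I}\to\lim_{J\subsetneq I}A_{J}$ for the comparison morphism, so that (ii) asks precisely that each $\eta_{I}$ lie in $\E$. When $n\leq 1$ both (i) and (ii) reduce to the single assertion $a_{0}\in\E$, and there is nothing to prove. For the inductive step I would split off the two largest directions $n-1$ and $n-2$, viewing the $n$-fold arrow $A$ as a square of $(n-2)$-fold arrows, that is, as an object of $\Arr(\Ext^{n-2}\A)$; by definition $A$ is an $n$-fold extension exactly when this square is an $\E^{n-2}$-double extension.

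The crucial ingredient is the case $n=2$, which I would establish directly for an arbitrary pair satisfying (E1)--(E3). A double extension requires the four edges $a,b,f_{1},f_{0}$ and the comparison $\langle a,f_{1}\rangle\colon A_{1}\to A_{0}\times_{B_{0}}B_{1}$ to lie in $\E$, whereas (ii) only asks for $b$, $f_{0}$ and $\langle a,f_{1}\rangle$. The two ``missing'' edges are recovered: the projections of $A_{0}\times_{B_{0}}B_{1}$ onto $A_{0}$ and $B_{1}$ are pullbacks of $b$ and $f_{0}$, hence lie in $\E$ by (E2), and composing them with $\langle a,f_{1}\rangle$ returns $a$ and $f_{1}$, which therefore lie in $\E$ by (E3); conversely a double extension trivially satisfies the three conditions of (ii). Thus (ii) is the economical form of the definition, and (E2)--(E3) bridge the gap. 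Applying this to $(\Ext^{n-2}\A,\E^{n-2})$, and using that limits in $\Ext^{n-2}\A$ are computed levelwise, $A$ is an $n$-fold extension if and only if three $(n-1)$-fold arrows are $(n-1)$-fold extensions: the two faces of $A$ omitting the direction $n-1$, respectively $n-2$, and the comparison cube $P$ sending the $(n-2)$-face that contains both $n-1$ and $n-2$ into the levelwise pullback of its two neighbours.

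To conclude I would feed each of these three arrows into the induction hypothesis, rewriting ``$(n-1)$-fold extension'' as condition (ii) for that arrow, and then verify that the resulting conditions assemble into exactly $\{\eta_{I}\in\E:\emptyset\neq I\subseteq n\}$. The two faces account for every $\eta_{I}$ with $\{n-2,n-1\}\not\subseteq I$. For $P$ the comparison conditions split according to whether the new comparison direction $\star$ is present: those with $\star$ present should match the $\eta_{I}$ with $\{n-2,n-1\}\subseteq I$, while those with $\star$ absent constitute condition (ii) for a sub-pullback-cube $P^{\flat}$, which is automatically an extension, since $P^{\flat}$ is the domain of a pullback of an extension (an element of $\E^{n-2}$ by (E2)) and the domain of a double extension is again an extension. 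The main obstacle is precisely this matching: it relies on a Fubini-type interchange of limits, identifying $\lim_{J\subsetneq I}A_{J}$ for $I\supseteq\{n-2,n-1\}$ with the corresponding limit of the cube $P$, and this demands careful bookkeeping of the indexing poset of subsets. Existence of the limits is automatic from (i) to (ii), where the relevant pullbacks are built from extensions by (E2), and is part of the hypothesis from (ii) to (i).
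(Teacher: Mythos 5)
Your proposal is correct in outline, and its skeleton---induction on $n$, splitting off the two outermost directions to view $A$ as a square of $(n-2)$-fold arrows, and the reduction to checking only the bottom face, the right face and the comparison to the pullback---matches the paper's. But the way you organise the combinatorics is genuinely different. The paper introduces the auxiliary class $\F^{n}$ of cubes satisfying (ii), characterises it inductively (all $n+1$ codomains in $\F^{n-1}$, plus the top comparison in $\E$), and proves $\E^{n}=\F^{n}$ by strong induction; its delicate point, namely that the comparison cube has \emph{all} of its codomains (in every direction, not just the square direction) in $\F^{n-2}$, is handled by a geometric argument extending the square to a cube and using that the back face of that cube is an extension. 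You avoid any mention of the other directions: your explicit $n=2$ lemma (recovering the two edges at the initial vertex by composing the comparison with the pullback projections, via (E2) and (E3)---a point the paper uses only implicitly) reduces everything to three $(n-1)$-fold arrows, and you then match condition (ii) for $A$ against condition (ii) for those three arrows, using that condition (ii) for the pullback cube $P^{\flat}$ is automatic because $P^{\flat}$ is itself an $(n-2)$-fold extension. What your route buys is a cleaner logical structure with no case analysis over directions; what it costs is that the matching for the comparison cube requires a Fubini-type identification $\lim_{L\subsetneq K}P_{L}\iso\lim_{J\subsetneq I}A_{J}$ at \emph{every} level $K\ni\star$, whereas the paper needs only one such identification per inductive step (the top one, $\lim_{J\subsetneq n+1}A_{J}=\lim_{J\subsetneq n}a_{J}$, which is easy precisely because $a$ is a comparison to a pullback).

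That identification is the one step you assert rather than prove, and it is the load-bearing part of your argument: one must unfold each pullback entry of $P^{\flat}$ into its defining cospan and check that the resulting diagram of $A_{J}$'s computes the same limit as the full diagram over $\{J\subsetneq I\}$, compatibly with the comparison maps. This is true, and it is the same kind of initiality argument the paper itself uses in the proof of Theorem~\ref{Theorem-Resolution-is-Cube}, so it is not a gap that breaks the proof---the paper dismisses its own (single, simpler) instance with ``it is easy to see''---but a complete write-up of your version must carry it out, since you need it in greater generality than the paper does.
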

\begin{proof}
We fix an isomorphism between $\Arrn\A$ and $\Cubn\A$, for instance the one described in Convention~\ref{Convention-Iso-Cubes-Arrows}.
For $n\geq 1$ let us denote the class of all $n$-cubes~$A$ in~$\A$ that satisfy Condition~(ii) by $\F^{n-1}$. Note that the classes~$\F^{n}$ may also be defined inductively as follows. The class $\F^{0}$ is $\E$. Now suppose the class~$\F^{n-1}$ is defined. Then $\F^{n}$ consists of all $(n+1)$-cubes~$A$ such that, when considered as an arrow between $n$-cubes in any of the $n+1$ possible directions, the codomain $n$-cube is in $\F^{n-1}$, and moreover the limit $\lim_{J\subsetneq n+1} A_{J}$ exists and the induced morphism ${A_{n+1}\to\lim_{J\subsetneq n+1} A_{J}}$ is in $\E$.

We are to show for all $n$ that $\E^{n}$ consists of all $(n+1)$-fold arrows $A$ of which the corresponding $(n+1)$-cube is in $\F^{n}$. Using the fixed isomorphism between $\Arrnn\A$ and $\Cubnn\A$, we can denote this by $\E^{n}=\F^{n}$. For $n\in\{0,1\}$ we clearly have $\E^{n}=\F^{n}$. Now consider $n\geq 2$ and suppose that $\E^{i-1}=\F^{i-1}$ for all $i\leq n$. Let $A$ be an $(n+1)$-fold arrow which is in $\F^{n}$. Then $A$ is a square in $\Ext^{n-1}\!\A$ as~below.
\[
\vcenter{\xymatrix@!0@=2.5em{{\cdot} \ar@{~>}[rd]|-{a} \ar[rrrr] \ar[dddd] &&&& {\cdot} \ar[dddd]^-{\in\F^{n-1}} \\
& {\cdot} \ar@{->}[lddd] \ar@{->}[rrru] \ar@{}[rrrddd]|<{\pullback} \\\\\\
{\cdot} \ar[rrrr]_-{\in\F^{n-1}} &&&& {\cdot}}}
\qquad\qquad
\vcenter{\xymatrix@!0@=2.5em{&& {\cdot} \ar@{~>}[rd]|-{b} \ar[rrrr] \ar@{.>}[dddd]|-{\hole} &&&& {\cdot} \ar[dddd] \\
&&& {\cdot} \ar@{.>}[lddd] \ar@{.>}[rrru] \ar@{}[rd]|<{\pullback} \\
{\cdot} \ar@{~>}[rd] \ar[rrrr] \ar[dddd] \ar@{~>}[rruu] &&&& {\cdot} \ar[dddd] \ar[rruu]\\
& {\cdot} \ar@{~>}[rruu]|-{\hole} \ar@{->}[lddd] \ar@{->}[rrru] \ar@{}[rd]|<{\pullback} \\
&& {\cdot} \ar@{.>}[rrrr] &&&& {\cdot}\\\\
{\cdot} \ar[rrrr] \ar@{.>}[rruu] &&&& {\cdot} \ar[rruu]}}
\]
As $A$ is in $\F^{n}$, both the right and bottom arrow of the square are elements of~$\F^{n-1}=\E^{n-1}$, so their pullback exists. We have to check that the comparison morphism $a$ is also in $\E^{n-1}=\F^{n-1}$. For this we must first check that all possible codomains of the $(n-1)$-cube $a$ are in $\F^{n-2}$. For the direction given in the square this is clear. So consider any other direction, and extend the square to a cube in that direction as on the right hand side above. Then, as~$A$ is an element of~$\F^{n}$, the back square of the cube is in $\F^{n-1}=\E^{n-1}$, so the factorisation~$b$ to the pullback, which is also the chosen codomain of $a$, is in $\E^{n-2}=\F^{n-2}$.

Secondly, it is easy to see that $\lim_{J\subsetneq n+1} A_{J}$ is the same as $\lim_{J\subsetneq n} a_{J}$, since~$a$ is the comparison to a pullback. Therefore $a$ is in $\F^{n-1}=\E^{n-1}$ and $A$ is in~$\E^{n}$.

Conversely, suppose $A$ is in $\E^{n}$. Then $A$ is again a square in $\Ext^{n-1}\!\A$ as before. This time we know that the right and bottom arrows are in the class $\E^{n-1}=\F^{n-1}$, but we must also show it for any other direction. Pick such a direction and extend the square to a cube as before. We are to show that the back square is in~$\F^{n-1}$. We already know this of the right and bottom squares, so the right and bottom arrow of the back square are in $\E^{n-2}$. Now the comparison $a$ is in $\E^{n-1}=\F^{n-1}$, so its codomain $b$ is in $\E^{n-2}$, which shows that the back square is in $\E^{n-1}=\F^{n-1}$. 

Finally, the limits $\lim_{J\subsetneq n+1} A_{J}$ and $\lim_{J\subsetneq n} a_{J}$ are again the same, so as $a$ is in~$\F^{n-1}$, the $(n+1)$-fold arrow $A$ is in $\F^{n}$, as desired.
\end{proof}

\begin{remark}
The condition $I\neq \emptyset$ in (ii) just means that we do not demand~$A_0$ to have global support, that is, we do not demand the unique morphism $A_0\to 1$ to the terminal object $1$ to be an extension. 
\end{remark}

\begin{remark}\label{Remark-DIP}
This proves that, in the case of surjective group homomorphisms (which is an instance of Example~\ref{Example-Regular}), our higher extensions coincide with the \emph{exact cubes} considered in~\cite{Donadze-Inassaridze-Porter}; see also~\cite{Brown-Ellis}.
\end{remark}

Depending on which is more convenient, from now on we shall use either of these characterisations of extensions.

\begin{remark}\label{Remark-Cubes}
Proposition~\ref{Proposition-DIP-Extension} implies that, for an $n$-fold arrow $A$, to be an extension is rightfully a property of the corresponding $n$-cube of $A$. The independence of the chosen isomorphism between $\Arrn\A$ and $\Cubn\A$ means that this property is preserved by all functors ${\Cubn\A\to\Cubn\A}$ induced by an automorphism of~$2^{n}$. Therefore we may sometimes say that an $n$-cube \emph{is} an extension, and the distinction between the different isomorphisms between the categories $\Arrn\A$ and $\Cubn\A$ becomes less important. 
\end{remark}

\section{Resolutions and extensions}\label{Section-Resolutions-and-Extensions}
In this section we analyse the concept of simplicial $\E$-resolution in terms of $n$-fold $\E$-extensions. Our main result in this section is Theorem~\ref{Theorem-Resolution-is-Cube} which states that an augmented semi-simplicial object $\AA$ is an $\E$-resolution if and only if the induced $n$-fold arrows $\arr_{n}\AA$ are $n$-fold extensions for all~$n\geq 1$. From now on we assume that the pair $(\A,\E)$ satisfies (E1)--(E3).

\subsection*{$\E$-resolutions} We start by giving the necessary definitions leading up to that of an $\E$-resolution.

\begin{definition}[Augmented semi-simplicial objects] 
Let $\A$ be a category. The category $\Ss\A$ of \defn{(augmented) semi-simplicial objects} in $\A$ and morphisms between them is the functor category $\Fun ((\Delta_{s})^{\op},\A)$, where $\Delta_{s}$ is the augmented semi-simplicial category. Its objects are the finite ordinals ${n\geq 0}$ and its morphisms are injective order-preserving maps. For a functor
\[
\AA\colon {(\Delta_{s})^{\op}\to \A},
\]
we denote the objects $\AA(n)$ by $A_{n-1}$, and the image of the inclusion ${n\to n+1}$ which does not reach $i$ by $\del_i$, so that an augmented semi-simplicial object $\AA$ corresponds to the following data: a sequence of objects $(A_{n})_{n\geq -1}$ with face operators (or faces) $(\del_{i}\colon {A_{n}\to A_{n-1}})_{0\leq i\leq n}$ for $0\leq n$, 
\[
\xymatrix@=3em{{\cdots} \ar@<1.5ex>[r] \ar@<.5ex>[r] \ar@<-.5ex>[r] \ar@<-1.5ex>[r] & A_2 \ar[r]|-{\del_{1}} \ar@<1ex>[r]^-{\del_{0}} \ar@<-1ex>[r]_-{\del_{2}} & A_1 \ar@<.5ex>[r]^-{\del_{0}} \ar@<-.5ex>[r]_-{\del_{1}} & A_0 \ar[r]^-{\del_{0}} & A_{-1}}
\]
subject to the identities
\[
\del_{i}\comp \del_{j} =\del_{j-1}\comp \del_{i}
\]
for $i<j$. The morphism $\del_{0}\colon{A_{0}\to A_{-1}}$ is called the \defn{augmentation} of $\AA$.
\end{definition}

\begin{definition}[Augmented quasi-simplicial objects] 
In addition to face operators, an \defn{(augmented) quasi-simplicial object} $\AA$ in $\A$ has degeneracy operators (or degeneracies) $(\sigma_{i}\colon {A_{n}\to A_{n+1}})_{0\leq i\leq n}$ for $0\leq n$, subject to the identities
\[
\del_{i}\comp\sigma_{j}=\begin{cases}\sigma_{j-1}\comp \del_{i} & \text{if $i<j$} \\
1 & \text{if $i=j$ or $i=j+1$}\\
\sigma_{j}\comp \del_{i-1} & \text{if $i>j+1$.}\end{cases}
\]
The augmented quasi-simplicial objects in $\A$ with the natural augmented quasi-simplicial morphisms between them form a category $\Sq\A$ which may be seen as a functor category $\Fun ((\Delta_{q})^{\op },\A)$.
\end{definition}

\begin{definition}
If, in addition to the above, an augmented quasi-simplicial object~$\AA$ satisfies 
\[ 
\sigma_{i}\comp \sigma_{j} = \sigma_{j+1}\comp \sigma_{i}
\]
for all $i\leq j$, we recover the usual definition of \defn{(augmented) simplicial object}. Such an object is well known to be a functor \(\AA\colon {\Delta\to \A}\) where \(\Delta\) has finite ordinals as objects and \emph{all} order-preserving maps as morphisms.
\end{definition}

Note that we use Mac\,Lane's notation from~\cite{MacLane} in including the empty set (i.e.\ $0$) in the category $\Delta$, rather than the topologists notation, where $\Delta$ starts at $1$, written as $[0]=\{0\}$. This is the reason for our numbering shift $\AA(n)=A_{n-1}$.

\begin{example}[Comonadic resolutions and Tierney-Vogel resolutions]\label{Example-Comonadic-TV}
Given a comonad $\GG=(G,\epsilon,\delta)$ on a category $\A$, each object $A$ in $\A$ can be extended to an augmented simplicial object $\AA=\GG A$ by setting $A_{-1}=A$ and $A_i=G^{i+1}A$ for $i\geq 0$, with faces $\del_i = G^{i}\epsilon_{G^{n-i}A}\colon {G^{n+1}A\to G^nA}$ and degeneracies $\sigma_i=G^i\delta_{G^{n-i}A}\colon {G^{n+1}A\to G^{n+2}A}$ (see e.g.~\cite{Barr-Beck}). This gives a genuine augmented simplicial object. 

Tierney-Vogel resolutions \cite{Tierney-Vogel, Tierney-Vogel2} of an object on the other hand only give rise to an augmented quasi-simplicial object. Such a resolution is obtained in a category with a projective class $(\P,\E)$ by covering $A=A_{-1}$ by a projective object, then successively taking simplicial kernels (see Definition~\ref{Definition-Simplicial-Kernels} below) and covering these by a projective object again.
\[
\xymatrix@=2em{{\cdots} \ar@<1.5ex>[rr] \ar@<.5ex>[rr] \ar@<-.5ex>[rr] \ar@<-1.5ex>[rr] \ar[dr]_-{\E\ni} && P_2 \ar[rr]^<<{\in\P} \ar@<1ex>[rr]|<{\hole}|<<{\hole}|<<<{\hole} \ar@<-1ex>[rr] \ar[dr]_-{\E\ni} && P_1 \ar@{}[rr]^<<{\in\P} \ar@<.5ex>[rr]|<{\hole}|<<{\hole}|<<<{\hole} \ar@<-.5ex>[rr] \ar[dr]_-{\E\ni} && P_0 \ar[rr]^<<{\in\P}_(.6){\in \E} && A_{-1}\\
& K_3 \ar@<1.5ex>[ur] \ar@<.5ex>[ur] \ar@<-.5ex>[ur] \ar@<-1.5ex>[ur] && K_2 \ar[ur] \ar@<1ex>[ur] \ar@<-1ex>[ur] && K_1 \ar@<.5ex>[ur] \ar@<-.5ex>[ur] & & } 
\]
This does induce degeneracies which commute with the face operators as demanded, but they may not commute with each other as required for a simplicial object.
\end{example}

\begin{definition}[Contractibility]
An augmented semi-simplicial object $\AA$ is \defn{contractible} when there is a sequence of morphisms $(\sigma_{-1}\colon{A_{n-1}\to A_{n}})_{0\leq n}$ such that \[
\del_{0}\comp \sigma_{-1}=1\qquad \text{and} \qquad \del_{i}\comp\sigma_{-1}=\sigma_{-1}\comp\del_{i-1}
\]
for all $i\geq 1$. %%%% want i restricted by n?
\end{definition}

\begin{example}
Given a comonad $\GG$ on a category $\A$, any augmented simplicial object of the form $\GG GA$ is contractible.
\end{example}

\begin{definition}[Simplicial kernels]\label{Definition-Simplicial-Kernels}
Let 
\[
(f_i\colon {X\to Y})_{0\leq i\leq n}
\]
be a sequence of $n+1$ morphisms in the category $\A$. A \defn{simplicial kernel} of $(f_0,\ldots,f_n)$ is a sequence 
\[
(k_i\colon {K\to X})_{0\leq i\leq n+1}
\]
of $n+2$ morphisms in $\A$ satisfying $f_ik_j=f_{j-1}k_i$ for $0\leq i<j\leq n+1$, which is universal with respect to this property. In other words, it is the limit for a certain diagram in $\A$.
\end{definition}

For example, the simplicial kernel of one morphism is just its kernel pair.
When simplicial kernels of a particular augmented semi-simplicial object $\AA$ exist, we can factor $\AA$ through its simplicial kernels as follows.
\[
\xymatrix@=2em{{\cdots} \ar@<1.5ex>[rr] \ar@<.5ex>[rr] \ar@<-.5ex>[rr] \ar@<-1.5ex>[rr] \ar[dr] && A_2 \ar[rr] \ar@<1ex>[rr] \ar@<-1ex>[rr] \ar[dr] && A_1 \ar@<.5ex>[rr] \ar@<-.5ex>[rr] \ar[dr] && A_0 \ar[rr] && A_{-1}\\
& K_3\AA \ar@<1.5ex>[ur] \ar@<.5ex>[ur] \ar@<-.5ex>[ur] \ar@<-1.5ex>[ur] && K_2\AA \ar[ur] \ar@<1ex>[ur] \ar@<-1ex>[ur] && K_1\AA \ar@<.5ex>[ur] \ar@<-.5ex>[ur] & & }
\]
Here the $K_{n+1}\AA$ are the simplicial kernels of the morphisms $(\del_i)_{i}\colon {A_{n} \to A_{n-1}}$. We may also sometimes write $K_0\AA=A_{-1}$.

%\begin{remark}
%If $\A$ has all pullbacks then simplicial kernels exist, as they may be computed as a succession of pullbacks (cf.\ the proof of Proposition~\ref{Proposition-Contractible-plus-Kan-is-Resolution}).
%\end{remark}

\begin{definition}
If all faces $\del_i$ of an (augmented) semi-simplicial object $\AA$ are in~$\E$, we call $\AA$ an \defn{(augmented) $\E$-semi-simplicial object}.
\end{definition}

\begin{definition}\label{Definition-Resolution}
An (augmented) semi-simplicial object $\AA$ is said to be \defn{$\E$-exact at $A_{n-1}$} when the simplicial kernel $K_{n}\AA$ exists and the factorisation ${A_{n}\to K_{n}\AA}$ is in $\E$.

An augmented semi-simplicial object $\AA$ is called an \defn{$\E$-resolution (of $A_{-1}$)} when~$\AA$ is $\E$-exact at $A_{n}$ for all $n\geq -1$.
\end{definition}

\begin{remark}
An $\E$-resolution is always an augmented $\E$-semi-simplicial object. Notice that since $K_0\AA=A_{-1}$, $\AA$ is $\E$-exact at $A_{-1}$ just when $\del_0\colon {A_0\to A_{-1}}$ is in $\E$.
\end{remark}

\begin{notation}\label{Notation-A^{-}}
Let $\AA$ be an augmented semi-simplicial object in $\A$. We can form another augmented semi-simplicial object $\AA^-$ by setting
\[
A^-_{n-1}=A_{n}\qquad\text{and}\qquad\del^-_i=\del_{i+1}\colon {A_{n+1}\to A_{n}},
\]
for $n\geq 0$ and $0\leq i\leq n$. This is the augmented semi-simplicial object obtained from $\AA$ by leaving out $A_{-1}$ and all $\del_0\colon {A_n\to A_{n-1}}$. Observe that $\del=(\del_{0})_n$ defines a morphism from $\AA^-$ to~$\AA$. 

When $\AA$ is a (quasi)-simplicial object, the degeneracy operators can be shifted in the same way to give a (quasi)-simplicial object $\AA^{-}$ and a morphism $\del\colon{\AA^{-}\to \AA}$ of (quasi)-simplicial objects.
\end{notation}

\begin{remark}\label{Remark-Contractible}
Note that $\AA^{-}$ is contractible when $\AA$ is an augmented quasi-simplicial object, and that an augmented semi-simplicial object $\AA$ is contractible if and only if $\del\colon{\AA^{-}\to \AA}$ is a split epimorphism of augmented semi-simplicial objects.
\end{remark}

\begin{remark}\label{Remark-Simp-Object-of-Arrows}
We may also view the morphism $\del\colon{\AA^{-}\to \AA}$ as an augmented semi-simplicial object of arrows, say $\BB$, with the $\del_0\colon {A_{n+1}\to A_n}$ forming the objects $B_n$. Notice that, when we view $\del$ as a morphism of semi-simplicial objects in $\A$, the direction of a square goes parallel to the $\del_0$ as in the left diagram below, depicting the morphism ${\del^-_i\to\del_i}$, whereas if we view it as a semi-simplicial object of arrows, the direction goes from one $\del_0$ to the next as in the right diagram, displaying the~$\del_i$ of the semi-simplicial object $\BB$.
\[
\vcenter{\xymatrix@!0@=10ex{A_{n+1} \ar[d]_-{\del_0} \ar[r]^-{\del_{i+1}=\del^-_i} \ar@{}[dr]|-{\Downarrow} & A_n \ar[d]^-{\del_0} \\
A_n \ar[r]_-{\del_i} & A_{n-1}}}
\qquad\qquad
\vcenter{\xymatrix@!0@=10ex{A_{n+1} \ar[d]_-{\del_0=B_n} \ar[r]^-{\del_{i+1}} \ar@{}[dr]|-{\Rightarrow} & A_n \ar[d]^-{\del_0=B_{n-1}}\\
A_n \ar[r]_-{\del_i} & A_{n-1}}}
\]
\end{remark}

\subsection*{Truncations and higher arrows}
If an augmented semi-simplicial object $\AA$ in~$\A$ is truncated at level~$n$, it corresponds to an $(n+1)$-fold arrow in $\A$ as follows. Truncation at level zero automatically gives a morphism $\del_0\colon {A_0\to A_{-1}}$. When we truncate at level one, we can use $\del\colon{\AA^{-}\to \AA}$ to view all the remaining information as an augmented semi-simplicial object of morphisms~$\BB$, truncated at level zero.
\[
\xymatrix@!0@=10ex{A_1 \ar@{}[rd]|-{\Rightarrow} \ar[d]_-{\del_0=B_0} \ar[r]^-{\del_1} & A_0 \ar[d]^-{\del_0=B_{-1}} \\
A_0 \ar[r]_-{\del_0} & A_{-1}}
\]
This can clearly be viewed as a double arrow. Similarly an augmented semi-simplicial object truncated at level $n$ corresponds to an augmented semi-simpli\-cial object of morphisms truncated at level $n-1$, which by induction corresponds to an $n$-fold arrow of morphisms in~$\A$, which in turn can be viewed as an $(n+1)$-fold arrow of objects in $\A$.

\begin{definition}\label{Definition-Cube}
The above determines a functor $\arr_{n}\colon{\Ss\A\to \Arrn\A}$ for any ${n\geq 1}$. We also consider
\[
\arr_{0}\colon{\Ss\A\to \Arr^{0}\!\A=\A\colon \AA\mapsto A_{-1}.}
\]
\end{definition}

This description may be illustrated by the commutative square
\begin{equation}\label{Truncation}
\vcenter{\xymatrix@1@C=3em@R=2em{\Ss\A \ar[r]^-{\arr_{n+1}} \ar[d] & \Arr^{n+1}\!\A \ar[d] \\
\Ss\Arr\A \ar[r]_-{\arr_{n}} & \Arr^{n}\Arr\A}}
\end{equation}
in which the left downward arrow sends $\AA$ to $\del\colon{\AA^{-}\to \AA}$ viewed as an augmented semi-simplicial object of arrows $\BB$ as in Remark~\ref{Remark-Simp-Object-of-Arrows}. The right downward arrow is the following isomorphism. We know that $\Arrnn\A\iso \A^{(2^{n+1})^{\op}}$ and $\Arr^{n}\Arr\A\iso \A^{(2^{n})^{\op}\times 2^{\op}}$ (using the isomorphism fixed in Convention~\ref{Convention-Iso-Cubes-Arrows}), so it is enough to describe the isomorphism between $2^{n+1}$ and $2^{n}\times 2$. Given a set $S\subset n$, we write~$S^{+1}$ for the set obtained from $S$ by shifting all elements up by one, that is, we have $i\in S$ if and only if $i+1\in S^{+1}$. Using this notation, we choose the isomorphism which sends a couple $(S,0)\in 2^{n}\times2$ to $S^{+1}\in 2^{n+1}$ and~$(S,1)\in 2^{n}\times2$ to $S^{+1}\cup\{0\}\in 2^{n+1}$.

There is another way of obtaining the functors $\arr_{n}$ which may be described as follows. For any $n\geq 0$, let
\[
F_{n}\colon{2^{n}\to \Delta^{+}_{s}}
\]
be the functor which maps a set $S\subseteq n$ to the associated ordinal $|S|$, and an inclusion $S\subseteq T$ to the corresponding order-preserving map ${|S|\to |T|}$: if
\[
T=\{x_{0}<x_{1}<\cdots<x_{|T|-1}\}
\]
and $S=\{x_{i_{0}}<x_{i_{1}}<\cdots<x_{i_{|S|-1}}\}$ then the map $|S|\to |T|$ sends $k$ to $i_{k}$. We again fix the isomorphism $\Arrn\A\iso \Cubn\A$ as described in Convention~\ref{Convention-Iso-Cubes-Arrows}.

\begin{lemma}\label{Lemma-Two-arrs}
For any $n\geq 0$, the functor $\arr_{n}\colon{\Ss\A\to \Arrn\A}$ is equal to
\[
\Fun(F_{n}^{\op},-)\colon{\Fun((\Delta^{+}_{s})^{\op},\A)\to \Fun((2^{n})^{\op},\A)}.
\]
\end{lemma}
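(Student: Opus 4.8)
The plan is to argue by induction on $n$, after identifying $\Arrn\A$ with $\Cubn\A=\Fun((2^{n})^{\op},\A)$ through the isomorphism fixed in Convention~\ref{Convention-Iso-Cubes-Arrows}. Since both $\arr_{n}$ and $F_{n}$ are defined uniformly for an arbitrary base category, I would prove the statement ``$\arr_{n}=\Fun(F_{n}^{\op},-)$ for every category'' by induction on $n$, so that in the inductive step the hypothesis may be applied to $\Arr\A$. Under the above identification the assertion reads: for every augmented semi-simplicial object $\AA$ the $n$-cube $\arr_{n}\AA$ coincides with $\AA\comp F_{n}^{\op}\colon{(2^{n})^{\op}\to\A}$. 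For $n=0$ there is nothing to prove, as $2^{0}=\{\emptyset\}$ carries only its identity, $F_{0}(\emptyset)=|\emptyset|=0$, and indeed $\arr_{0}\AA=A_{-1}=\AA(0)=\AA(F_{0}(\emptyset))$.

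For the inductive step I would exploit the commutative square~\eqref{Truncation}. Writing $L\colon{\Ss\A\to\Ss\Arr\A}$ for its left-hand functor $\AA\mapsto\BB$ and $R\colon{\Arrnn\A\to\Arrn\Arr\A}$ for its right-hand isomorphism, commutativity gives $\arr_{n+1}=R^{-1}\comp\arr_{n}\comp L$. Here $R^{-1}$ is precomposition with $\phi^{\op}$, where $\phi\colon{2^{n+1}\to 2^{n}\times 2}$ is the chosen isomorphism (its inverse sending $(S,0)$ to $S^{+1}$ and $(S,1)$ to $S^{+1}\cup\{0\}$, as described after~\eqref{Truncation}). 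The crucial observation is that the object $\BB=L\AA$ of $\Ss\Arr\A$, regarded after uncurrying $\Arr\A=\Fun(2^{\op},\A)$ as a functor $(\Delta^{+}_{s})^{\op}\times 2^{\op}\to\A$, is precisely $\AA\comp H^{\op}$, where $H\colon{\Delta^{+}_{s}\times 2\to\Delta^{+}_{s}}$ is the d\'ecalage functor classifying the natural transformation $\eta\colon{\mathrm{id}\To s}$ with $s=1\oplus(-)$, $s(m)=1+m$, and $\eta_{m}\colon{m\to 1+m}$ the order-preserving injection omitting the new bottom element; this is exactly the transformation that induces $\del\colon{\AA^{-}\to\AA}$. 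Feeding the inductive hypothesis $\arr_{n}\BB=\BB\comp F_{n}^{\op}$ into $\arr_{n+1}=R^{-1}\comp\arr_{n}\comp L$ and uncurrying, everything reduces to the single identity of functors
\[
F_{n+1}=H\comp(F_{n}\times\mathrm{id}_{2})\comp\phi\colon{2^{n+1}\to\Delta^{+}_{s}},
\]
in which the category $\A$ no longer appears.

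Finally I would verify this purely combinatorial identity. On objects it is immediate: for $U\subseteq n+1$, writing $(S,\epsilon)=\phi(U)$, one has $|U|=|S|+\epsilon$, which matches $H(F_{n}(S),\epsilon)=|S|+\epsilon$. On morphisms I would split an inclusion $U\subseteq U'$ into the three possible cases according to whether $0\in U$ and $0\in U'$ (the case $0\in U$, $0\notin U'$ being excluded by $U\subseteq U'$): when $0$ lies in neither, $\phi$ stays in the $\epsilon=0$ copy and $F_{n+1}(U\subseteq U')$ is literally $F_{n}(S\subseteq S')$ after shifting indices down; when $0$ lies in both, $\phi$ stays in the $\epsilon=1$ copy and one recovers $s\bigl(F_{n}(S\subseteq S')\bigr)$ with the two new bottom elements matched up; and in the mixed case the morphism factors through $\eta$, yielding $\eta_{|S'|}\comp F_{n}(S\subseteq S')$, which is exactly $F_{n+1}(U\subseteq U')$ because the bottom element $0\in U'$ is not in the image. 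I expect the main obstacle to be not these individual checks but rather the correct pinning down of the d\'ecalage functor $H$ together with disciplined bookkeeping of the several opposite categories and of the shift $S\mapsto S^{+1}$, so that the combinatorics of $\phi$, $F_{n}$ and $\eta$ align; once $H$ is identified, the case analysis is routine.
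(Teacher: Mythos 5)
Your proposal is correct and follows essentially the same route as the paper: induction on $n$ (over an arbitrary base category) using the defining square~\eqref{Truncation}, reducing everything to the commutativity of that square with $\arr_{n+1}$ and $\arr_{n}$ replaced by $\Fun(F_{n+1}^{\op},-)$ and $\Fun(F_{n}^{\op},-)$. The paper simply asserts that this commutativity is ``enough to check'' and leaves the verification to the reader, whereas you carry it out explicitly---identifying the left-hand functor as precomposition with the d\'ecalage functor $H$ and checking the combinatorial identity $F_{n+1}=H\comp(F_{n}\times\mathrm{id}_{2})\comp\phi$---which is a sound and complete filling-in of exactly that omitted step.
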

\begin{proof}
As $\arr_{n+1}$ is defined inductively by the square~\eqref{Truncation} above and $\arr_{0}$ clearly coincides with $\Fun(F_{0}^{\op},-)$, it is enough to check that the square
\[
\xymatrix@1@C=5em@R=2em{\Ss\A \ar[r]^-{\Fun(F_{n+1}^{\op},-)} \ar[d] & \Arr^{n+1}\!\A \ar[d] \\
\Ss\Arr\A \ar[r]_-{\Fun(F_{n}^{\op},-)} & \Arr^{n}\Arr\A}
\]
commutes.
\end{proof}

\begin{lemma}\label{Lemma-Codomains}
Let $\AA$ be an augmented semi-simplicial object, $n\geq 1$ and $\arr_{n}\AA$ the induced $n$-fold arrow. As mentioned above, the corresponding $n$-cube may be considered as an arrow between $(n-1)$-cubes in $n$ different ways. The codomains of all of these arrows determine the same $(n-1)$-cube.
\end{lemma}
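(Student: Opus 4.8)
The plan is to compute all $n$ codomains explicitly via the functorial description of $\arr_{n}$ given by Lemma~\ref{Lemma-Two-arrs}, and to show that each of them equals one and the same $(n-1)$-cube, namely $\arr_{n-1}\AA$. By that lemma we have $\arr_{n}\AA=\AA\comp F_{n}^{\op}$, so the $n$-cube $\arr_{n}\AA$ is entirely governed by the functor $F_{n}\colon {2^{n}\to \Delta^{+}_{s}}$, and the whole statement reduces to a combinatorial fact about $F_{n}$.

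First I would identify, for each $i\in n$, the codomain $(n-1)$-cube of $\arr_{n}\AA$ in direction $i$. By Convention~\ref{Convention-Iso-Cubes-Arrows} the edges in direction $i$ are precisely those of the form $A_{S\cup\{i\}}\to A_{S}$ with $i\notin S$, so the codomain in direction $i$ is the restriction of $\arr_{n}\AA$ to the subsets of $n$ not containing $i$, read as an $(n-1)$-cube by relabelling $n\setminus\{i\}$ as $n-1$. Writing $u_{i}\colon {n-1\to n}$ for the order-preserving injection with image $n\setminus\{i\}$ and $2^{u_{i}}\colon {2^{n-1}\to 2^{n}}$ for the induced functor $S\mapsto u_{i}(S)$, this codomain is exactly $\arr_{n}\AA\comp (2^{u_{i}})^{\op}=\AA\comp F_{n}^{\op}\comp (2^{u_{i}})^{\op}=\AA\comp (F_{n}\comp 2^{u_{i}})^{\op}$.

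The key step is then the identity $F_{n}\comp 2^{u_{i}}=F_{n-1}$ of functors $2^{n-1}\to\Delta^{+}_{s}$, whose right-hand side does not mention $i$. On objects this is immediate, since $u_{i}$ is injective and hence $F_{n}(u_{i}(S))=|u_{i}(S)|=|S|=F_{n-1}(S)$. On morphisms it follows from the fact that $u_{i}$ is an order isomorphism onto its image: for an inclusion $S\subseteq T$ in $2^{n-1}$, the position of each element of $u_{i}(S)$ inside $u_{i}(T)$ equals the position of the corresponding element of $S$ inside $T$, so the order-preserving map $|S|\to|T|$ that $F_{n}$ assigns to $u_{i}(S)\subseteq u_{i}(T)$ coincides with the one $F_{n-1}$ assigns to $S\subseteq T$. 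Combined with the previous paragraph this gives that the codomain in direction $i$ equals $\AA\comp F_{n-1}^{\op}=\arr_{n-1}\AA$, which is independent of $i$, as desired.

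I expect the only delicate point to be the morphism part of the identity $F_{n}\comp 2^{u_{i}}=F_{n-1}$: one must unwind the definition of $F_{n}$ on inclusions (the assignment $k\mapsto i_{k}$ recording positions) and check that relabelling the ground set by the order isomorphism $u_{i}$ leaves these positions unchanged. Once this is verified, independence from $i$ is automatic, since $F_{n-1}$ involves no reference to $i$. Everything else is bookkeeping with the fixed isomorphism $\Arrn\A\iso\Cubn\A$ of Convention~\ref{Convention-Iso-Cubes-Arrows}, used only to read off ``codomain in direction $i$'' as ``restriction to the subsets avoiding $i$''.
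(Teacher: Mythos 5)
Your proposal is correct and follows essentially the same route as the paper: both identify the codomain in direction $i$ with the restriction of $\arr_{n}\AA$ to the subsets of $n$ avoiding $i$, and both conclude via Lemma~\ref{Lemma-Two-arrs} together with the combinatorial fact that $F_{n}$ treats all these subcategories $2^{n\setminus\{i\}}$ identically. Your version is in fact slightly more precise than the paper's, since you pin down the functor identity $F_{n}\comp 2^{u_{i}}=F_{n-1}$ (including the morphism part) and thereby identify every codomain explicitly with $\arr_{n-1}\AA$, where the paper only asserts that the restrictions agree.
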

\begin{proof}
A subset $S$ of $n$ determines the full subcategory $2^{S}$ of $2^{n}$. If $|S|=n-1$, the restriction of $\arr_{n}\AA$ to $2^{S}$ is one of the codomains considered in the statement of the lemma. Given two subsets $S$ and $T$ of $n$ such that $|S|=|T|=n-1$, the subcategories $2^{S}$ and $2^{T}$ are mapped by the functor $F_{n}$ to one and the same subcategory of $\Delta^{+}_{s}$. Thus, using the alternative description of the functor $\arr_{n-1}$ from Lemma~\ref{Lemma-Two-arrs}, we see that for any augmented semi-simplicial object~$\AA$, the two induced restrictions of $\arr_{n}\A$ to the $(n-1)$-cubes determined by $S$ and $T$ are equal to each other.
\end{proof}

This brings us to the main result of this section.
\begin{theorem}\label{Theorem-Resolution-is-Cube}
An augmented semi-simplicial object $\AA$ is an $\E$-resolution if and only if $\arr_{n}\AA$ is an $n$-fold extension for all $n\geq 1$.
\end{theorem}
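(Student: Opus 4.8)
The plan is to combine the limit characterisation of higher extensions from Proposition~\ref{Proposition-DIP-Extension} with the functorial description of $\arr_n$ from Lemma~\ref{Lemma-Two-arrs}, the whole point being to recognise the limits $\lim_{J\subsetneq I}A_J$ occurring in the former as exactly the simplicial kernels occurring in the definition of an $\E$-resolution. First I would make the cube $\arr_n\AA$ fully explicit: since $\arr_n\AA=\Fun(F_n^{\op},\AA)$ and $F_n$ sends $S\subseteq n$ to the ordinal $|S|$, we get $(\arr_n\AA)_S=A_{|S|-1}$, and each edge $a_S^T$ with $|T|=|S|+1$ is a face $\del_p$, where $p$ is the position in $T$ of the unique element of $T\setminus S$. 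In particular the top vertex is $A_{n-1}$ and its maps to the maximal faces $A_{n\backslash\{i\}}=A_{n-2}$ are precisely the $\del_i$.

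The key step, and the main obstacle, is the following lemma: for every $\emptyset\neq I\subseteq n$ with $|I|=m$, the limit $\lim_{J\subsetneq I}A_J$ exists if and only if the simplicial kernel $K_{m-1}\AA$ exists, in which case they coincide, and the comparison $A_I\to\lim_{J\subsetneq I}A_J$ coincides with the canonical factorisation $A_{m-1}\to K_{m-1}\AA$. Restricting $\arr_n\AA$ to $2^{I}$ and using Lemma~\ref{Lemma-Codomains}, this depends only on $m$, so it suffices to treat $I=m$. To identify the limit with the simplicial kernel I would note that a cone over the punctured cube is determined by its components $l_i\colon L\to A_{m\backslash\{i\}}=A_{m-2}$ at the $m$ maximal proper subsets, subject only to the compatibilities through the pairwise meets $A_{m\backslash\{i,j\}}=A_{m-3}$; a short index computation with $F_m$ shows that for $i<j$ these two maps $A_{m-2}\to A_{m-3}$ are $\del_{j-1}$ and $\del_i$, so the compatibility reads $\del_{j-1}l_i=\del_i l_j$, which is exactly the defining identity of $K_{m-1}\AA$. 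One must also check that compatibilities through smaller subsets impose nothing further, and this holds because any two maximal proper subsets containing a given $J$ already meet in a subset containing $J$, so the lower constraints are obtained from the pairwise ones by composition. Finally, the comparison out of the top vertex uses the faces $\del_i\colon A_{m-1}\to A_{m-2}$, which is precisely the factorisation to the simplicial kernel.

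With this lemma in hand the theorem follows by combining Proposition~\ref{Proposition-DIP-Extension} with the definition of an $\E$-resolution. Indeed, $\arr_n\AA$ is an $n$-fold extension for all $n\geq 1$ if and only if, for every $n$ and every $\emptyset\neq I\subseteq n$, the comparison $A_I\to\lim_{J\subsetneq I}A_J$ lies in $\E$; by the lemma this is equivalent to requiring, for every $m\geq 1$, that $K_{m-1}\AA$ exist and $A_{m-1}\to K_{m-1}\AA$ be in $\E$, that is, that $\AA$ be $\E$-exact at $A_{m-2}$. Letting $m$ range over all positive integers, this says exactly that $\AA$ is $\E$-exact at $A_k$ for every $k\geq -1$, which is the definition of an $\E$-resolution. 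Both implications come out together from this single chain of equivalences, the existence of the relevant limits and simplicial kernels being matched up by the lemma.
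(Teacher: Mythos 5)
Your proof is correct and follows essentially the same route as the paper's: both rest on Proposition~\ref{Proposition-DIP-Extension} together with the observation that the limit of a punctured cube coming from $\arr_n\AA$ reduces (by initiality of the top two layers) to the defining diagram of a simplicial kernel, with Lemma~\ref{Lemma-Codomains} handling the non-canonical faces. The only organisational difference is that you prove the identification for \emph{all} faces $I$ at once and conclude by a single chain of equivalences, whereas the paper identifies only the top comparison and handles the codomains by induction on $n$; your index computations (edges $\del_p$, compatibilities $\del_{j-1}l_i=\del_i l_j$, and the identification of the punctured limit of an $m$-cube with $K_{m-1}\AA$) are all accurate.
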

\begin{proof}
If $\AA$ is an $\E$-resolution, then $\arr_1\AA=\del_0\colon {A_0\to A_{-1}}$ is an extension by definition, and conversely $\arr_1\AA$ being an extension implies that $\AA$ is $\E$-exact at~$A_{-1}$, which is the first condition for $\AA$ to be an $\E$-resolution. For ${n\geq 2}$, consider the full subcategory $\D$ of~$2^{n}$ determined by all sets $S\subseteq n$ with $n-2\leq |S|\leq n-1$. It is easy to see that~$\D$ is initial in the full subcategory of $(2^{n})^{\op}$ containing all objects except~$n$. It follows that, for any $n$-fold arrow $A\colon{(2^{n})^{\op}\to\A}$,
\[
\lim_{J\subsetneq n}A_{J}=\lim_{J\in |\D|}A_{J}.
\]
If now $A=\arr_{n}\AA$ for an augmented semi-simplicial object $\AA$, the subdiagram is exactly the diagram which determines $K_{n}\AA$. 
\[
\vcenter{\xymatrix@!0@=3em{& A_{1} \ar[rr]^-{\del_{1}} \ar@{->}[dd]^(.25){\del_{0}}|-{\hole} && A_{0} \ar@{.>}[dd] \\
L \ar@{-->}[rr]_(.25){l_{2}} \ar@{-->}[dd]_-{l_{0}} \ar@{-->}[ru]^-{l_{1}} && A_{1} \ar@{->}[dd]^(.25){\del_{0}} \ar[ru]_-{\del_{1}}\\
& A_{0} \ar@{.>}[rr] && A_{-1}\\
A_{1} \ar[rr]_-{\del_{1}} \ar@{->}[ru]_-{\del_{0}} && A_{0} \ar@{.>}[ru]}}
\]
This shows that if either limit exists, then the other exists and they are the same. This automatically proves one of the implications, using the condition for extensions given in (ii) of Proposition~\ref{Proposition-DIP-Extension}. For the other, we must also show that each codomain of $\arr_{n}\AA$ is an $(n-1)$-extension. Lemma~\ref{Lemma-Codomains} shows that checking one codomain suffices. The canonical codomain of the $n$-fold arrow~$\arr_{n}\AA$ is $\arr_{n-1}\AA$ and as such is an extension by induction.
\end{proof}

This makes clear that we can view (semi)-simplicial resolutions as ``infinite dimensional extensions'', and a higher extension as a finite-dimensional resolution.

\begin{remark}
Theorem \ref{Theorem-Resolution-is-Cube} shows, in particular, that one can use the $n$-trunca\-tion of a canonical simplicial resolution $\GG A$ of an object $A$ as an $n$-fold projective presentation of $A$ (a special kind of higher extension) in order to compute the higher Hopf formulae which give the homology of $A$ (as e.g.\ in the article~\cite{EGVdL}).
\end{remark}

\begin{corollary}\label{Corollary-Minus}
An augmented semi-simplicial object $\AA$ is an~$\E$-resolution if and only if the augmented semi-simplicial object of arrows $\del\colon{\AA^{-}\to \AA}$ is an~$\E^{1}$-resolu\-tion.
\end{corollary}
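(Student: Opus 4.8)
The plan is to play off Theorem~\ref{Theorem-Resolution-is-Cube}, applied to the two pairs $(\A,\E)$ and $(\Ext\A,\E^{1})$, against one another, using the commutative square~\eqref{Truncation} as the bridge. By Proposition~\ref{Proposition-Extensions} the pair $(\Ext\A,\E^{1})$ again satisfies (E1)--(E3), so Theorem~\ref{Theorem-Resolution-is-Cube} applies to it as well. I would first recall from Remark~\ref{Remark-Simp-Object-of-Arrows} that the augmented semi-simplicial object of arrows $\BB$ associated with $\del\colon{\AA^{-}\to\AA}$ is exactly the image of $\AA$ under the left-hand vertical functor $\Ss\A\to\Ss\Arr\A$ of~\eqref{Truncation}.

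Theorem~\ref{Theorem-Resolution-is-Cube} then supplies two equivalences: $\AA$ is an $\E$-resolution if and only if $\arr_{n}\AA$ is an $n$-fold $\E$-extension for all $n\geq 1$; and $\BB$ is an $\E^{1}$-resolution if and only if $\arr_{m}\BB$ is an $m$-fold $\E^{1}$-extension for all $m\geq 1$. The square~\eqref{Truncation} says that, under the isomorphism $\Arrnn\A\iso\Arr^{n}\Arr\A$, the arrow $\arr_{n+1}\AA$ is carried precisely to $\arr_{n}\BB$. Moreover the inductive definition of the higher extension classes gives $\E^{n}=(\E^{1})^{n-1}$, and being an extension is a property of the underlying cube, hence insensitive to the chosen isomorphism (Remark~\ref{Remark-Cubes}); so under this identification an $m$-fold $\E^{1}$-extension is the same thing as an $(m+1)$-fold $\E$-extension. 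Combining these, $\arr_{m}\BB$ is an $m$-fold $\E^{1}$-extension if and only if $\arr_{m+1}\AA$ is an $(m+1)$-fold $\E$-extension, whence $\BB$ is an $\E^{1}$-resolution if and only if $\arr_{n}\AA$ is an $n$-fold $\E$-extension for all $n\geq 2$.

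What remains is to account for the degree $n=1$, which is not reached by the arrows $\arr_{m}\BB$ with $m\geq 1$. This condition, that $\arr_{1}\AA=\del_{0}\colon{A_{0}\to A_{-1}}$ lie in $\E$, is exactly the statement that the augmentation object $\arr_{0}\BB=B_{-1}$, namely the arrow $\del_{0}\colon{A_{0}\to A_{-1}}$, is an object of $\Ext\A$; and this is part of what it means for $\BB$ to be an augmented semi-simplicial object in $\Ext\A$, hence part of $\BB$ being an $\E^{1}$-resolution. In the reverse direction, if $\AA$ is an $\E$-resolution then all its faces $\del_{0}$ lie in $\E$, so $\BB$ indeed takes its values in $\Ext\A$. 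Adjoining this degree-$1$ condition to the range $n\geq 2$ then gives ``$\arr_{n}\AA$ is an $n$-fold extension for all $n\geq 1$'' on the nose, which by Theorem~\ref{Theorem-Resolution-is-Cube} is the sought equivalence with $\AA$ being an $\E$-resolution.

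The hard part will be the bookkeeping of the dimension shift: matching ``$m$-fold $\E^{1}$'' with ``$(m+1)$-fold $\E$'', verifying that the particular isomorphism of~\eqref{Truncation} respects the inductively defined extension classes, and, above all, not dropping the base case $n=1$, which has to be recovered from the mere requirement that the augmentation of $\BB$ be an extension rather than from any of the higher arrows $\arr_{m}\BB$.
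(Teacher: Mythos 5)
Your proof is correct and is essentially the paper's own argument: the paper's proof consists of the single observation that, for $n\geq 0$, the truncation $\arr_{n+1}\AA$ is an $(n+1)$-fold $\E$-extension in $\A$ precisely when $\arr_{n}\BB$ is an $n$-fold $\E^{1}$-extension in $\Ext\A$, which, combined with Theorem~\ref{Theorem-Resolution-is-Cube} applied to both $(\A,\E)$ and $(\Ext\A,\E^{1})$, gives the statement. What you add---tracking the isomorphism of diagram~\eqref{Truncation}, identifying $m$-fold $\E^{1}$-extensions with $(m+1)$-fold $\E$-extensions via Remark~\ref{Remark-Cubes}, and recovering the level $n=1$ from the requirement that $B_{-1}=\del_{0}\colon{A_{0}\to A_{-1}}$ be an object of $\Ext\A$---is exactly the bookkeeping that the paper leaves implicit in its ``for $n\geq 0$'' formulation.
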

\begin{proof}
For $n\geq 0$, the $(n+1)$-truncation $\arr_{n+1}\AA$ of the augmented semi-simplicial object of objects $\AA$ is an $(n+1)$-fold \E-extension in~$\A$ precisely when the $n$-truncation $\arr_{n}\BB$ of the augmented semi-simplicial object of arrows $\BB=\del\colon{\AA^{-}\to \AA}$ is an $n$-fold $\E^1$-extension in~$\Ext\A$. 
\end{proof}

\section{The relative Mal'tsev axiom}\label{Section-Relative-Maltsev}
We now investigate a relative version of the Kan property for simplicial objects and its connections to properties of higher extensions. The main condition on higher extensions in this context is a relative Mal'tsev axiom, which is equivalent to two other important conditions.

Throughout this section, we consider the following axioms on $(\A,\E)$:
\begin{enumerate}
\item[(E1)] $\E$ contains all isomorphisms;
\item[(E2)] pullbacks of morphisms in $\E$ exist in $\A$ and are in $\E$;
\item[(E3)] $\E$ is closed under composition;
\item[(E4)] if $g\comp f\in \E$ then $g\in\E$;
\item[(E5)] the \defn{$\E$-Mal'tsev axiom}: any split epimorphism of extensions
\[
\xymatrix{A_1 \ar@<.5ex>[r]^-{f_1} \ar[d]_-{a} & B_1 \ar[d]^-{b} \ar@<.5ex>[l]\\
A_0 \ar@<.5ex>[r]^-{f_0} & B_0 \ar@<.5ex>[l]}
\]
in $\A$ is a double extension.
\end{enumerate}
Notice that (E1) and (E4) together imply that all split epimorphisms are in~$\E$.

\begin{remark}
Axioms (E1)--(E4) say exactly that our class $\E$ generates a Grothendieck topology (or Grothendieck coverage) on $\A$, see e.g.~\cite[Defini\-tion~C2.1.8]{Johnstone:Elephant}.
\end{remark}

\subsection*{Axiom (E5)}
We will first show that (E5) is equivalent to two other conditions, connecting the class of extensions $\E$ and the corresponding class $\E^1$ of double extensions. To do this, we make use of the following lemma.

\begin{lemma}\label{Extension-Left-Right}
Let $(\A,\E)$ satisfy (E1)--(E4). Consider a diagram
\[
\vcenter{\xymatrix{\R[f_1] \ar[d]_{r} \ar@<-.5ex>[r]_-{\pi_{1}} \ar@<.5ex>[r]^-{\pi_{0}} & A_1 \ar[r]^{f_1} \ar[d]_{a} & B_1 \ar[d]^{b}\\
\R[f_0] \ar@<-.5ex>[r]_-{\pi_{1}'} \ar@<.5ex>[r]^-{\pi_{0}'} & A_0 \ar[r]_{f_0} & B_0}}
\]
in $\A$ with $a$, $b$, $f_1$ and~$f_0$ in $\E$ and $\R[f_1]$ and $\R[f_0]$ the kernel pairs of~$f_1$ and~$f_0$. Either (hence both) of the left hand side commutative squares is in~$\E^{1}$ if and only if the right hand side square is in $\E^{1}$.
\end{lemma}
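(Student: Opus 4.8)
The plan is to reduce each square to its single comparison morphism and then relate the two comparisons by a pullback. Throughout, write $P=A_{0}\times_{B_{0}}B_{1}$ for the pullback of $f_{0}$ and $b$, and let $\theta=\langle a,f_{1}\rangle\colon A_{1}\to P$ be the comparison of the right hand square; by definition that square lies in $\E^{1}$ precisely when $\theta\in\E$, since the four outer arrows $a$, $b$, $f_{1}$, $f_{0}$ are in $\E$ by hypothesis. For the left hand $\pi_{0}$-square, whose comparison is $\langle r,\pi_{0}\rangle\colon \R[f_{1}]\to\R[f_{0}]\times_{A_{0}}A_{1}$, first note that the kernel pair projections $\pi_{0}$, $\pi_{1}$, $\pi_{0}'$, $\pi_{1}'$ are pullbacks of $f_{1}$, resp.\ $f_{0}$, along themselves, hence lie in $\E$ by (E2); note, however, that $r\in\E$ is not given and must be derived.

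The key step is to identify $\langle r,\pi_{0}\rangle$ with a pullback of $\theta$. Using $f_{0}\comp a=b\comp f_{1}$ one obtains a canonical isomorphism $\R[f_{0}]\times_{A_{0}}A_{1}\iso P\times_{B_{1}}A_{1}$, where the latter is the pullback of the projection $P\to B_{1}$ and of $f_{1}$; under this identification the projection $\mathrm{pr}\colon P\times_{B_{1}}A_{1}\to P$ is a pullback of $f_{1}$, so $\mathrm{pr}\in\E$ by (E2). A second identification exhibits $\R[f_{1}]$ as the pullback $A_{1}\times_{P}(P\times_{B_{1}}A_{1})$ of $\theta$ along $\mathrm{pr}$, in such a way that $\langle r,\pi_{0}\rangle$ becomes the projection to $P\times_{B_{1}}A_{1}$ and $\pi_{1}$ the projection to $A_{1}$. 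In other words, there is a pullback square with top edge $\langle r,\pi_{0}\rangle$, bottom edge $\theta$, left edge $\pi_{1}$ and right edge $\mathrm{pr}$. \emph{This identification is the only real content of the lemma; once it is in place, both implications follow immediately from the axioms.}

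For the passage from the right to the left square, suppose $\theta\in\E$. Then its pullback $\langle r,\pi_{0}\rangle$ lies in $\E$ by (E2), and composing with the projection $\R[f_{0}]\times_{A_{0}}A_{1}\to\R[f_{0}]$ (a pullback of $a\in\E$, hence in $\E$) yields $r\in\E$ by (E3); together with $a$, $\pi_{0}$, $\pi_{0}'\in\E$ this says exactly that the left square is in $\E^{1}$. For the converse, suppose $\langle r,\pi_{0}\rangle\in\E$. Since $\mathrm{pr}\in\E$, the composite $\mathrm{pr}\comp\langle r,\pi_{0}\rangle=\theta\comp\pi_{1}$ lies in $\E$ by (E3), whence $\theta\in\E$ by right cancellation (E4); as the outer arrows are already in $\E$, the right square is in $\E^{1}$. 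Here (E4) is used at the level of $(\A,\E)$, which is essential: the pair $(\Ext\A,\E^{1})$ is only known to satisfy (E1)--(E3), so the argument genuinely has to descend to $\A$.

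Finally, the statement for the $\pi_{1}$-square follows by symmetry: the factor-swapping automorphisms of $\R[f_{1}]$ and $\R[f_{0}]$ interchange the two left hand squares and commute with $r$, so the $\pi_{0}$- and $\pi_{1}$-squares are simultaneously in $\E^{1}$; both are thus equivalent to the right hand square being in $\E^{1}$. The one delicate point is the pullback identification of the second paragraph, which I would verify either by a direct elementwise computation or, point-free, by checking the relevant universal properties of the pullbacks $P$, $\R[f_{0}]\times_{A_{0}}A_{1}$ and $\R[f_{1}]$.
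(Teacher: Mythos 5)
Your proposal is correct, and its hard direction (left square gives right square) is exactly the paper's argument: both rest on the commutative square with edges $\pi_{1}$, $\langle r,\pi_{0}\rangle$, $\theta=\langle a,f_{1}\rangle$ and $\mathrm{pr}=\pi_{1}'\times_{f_{0}}f_{1}$, the observation that $\mathrm{pr}$ lies in $\E$ as a pullback of $f_{1}$, and then (E3) followed by right cancellation (E4) in $(\A,\E)$ --- including your (correct) remark that (E4) cannot be invoked for $\E^{1}$ here. Where you genuinely diverge is in the easy direction and in what you claim about that square. The paper uses only its commutativity; for the right-to-left implication it instead observes that the left square is the kernel pair of $(f_{1},f_{0})\colon{a\to b}$ in $\Arr\A$, i.e.\ a pullback of the right square along itself, and applies (E2) for $(\Ext\A,\E^{1})$, which holds by Proposition~\ref{Proposition-Extensions}. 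You prove the stronger statement that the comparison square is itself a pullback in $\A$, so that $\langle r,\pi_{0}\rangle$ is literally a pullback of $\theta$ and (E2) in $\A$ applies directly, with $r\in\E$ then recovered by composing with the projection to $\R[f_{0}]$ (a pullback of $a$) --- a point you rightly flag as needing proof, since $r\in\E$ is part of the left square being in $\E^{1}$. What your route buys is a proof that never leaves $(\A,\E)$, with no appeal to the axioms going up to $\E^{1}$; the cost is the pullback identification you defer at the end. That identification is true and can be checked point-free rather than elementwise: paste your square onto the pullback square defining $\mathrm{pr}$ (namely $P\times_{B_{1}}A_{1}\to P$ as the pullback of $f_{1}$ along ${P\to B_{1}}$); the resulting rectangle has edges $\pi_{0}$, $\pi_{1}$, $f_{1}$, $f_{1}$, which is precisely the kernel-pair pullback of $f_{1}$, so the cancellation half of the pasting lemma makes your square a pullback. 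Your twist-automorphism argument identifying the $\pi_{0}$- and $\pi_{1}$-squares is also fine; the paper leaves that symmetry implicit.
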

\begin{proof}
The right-to-left implication follows from~(E2) for the class~$\E^{1}$. Now suppose that the square $a\comp \pi_{0}=\pi_{0}'\comp r$ is in $\E^{1}$. The diagram induces the following commutative cube and the right hand side commutative comparison square to the pullback.
\[
\vcenter{\xymatrix@!0@=3em{& A_1 \ar[rr]^-{f_1} \ar@{.>}[dd] && B_1 \ar[dd] \\
\R[f_1] \ar[rr] \ar[dd]_-{r} \ar[ru] && A_1 \ar[dd] \ar[ru]_-{f_1}\\
& A_0 \ar@{.>}[rr]^(.75){f_0} && B_0\\
\R[f_0] \ar[rr] \ar@{.>}[ru] && A_0 \ar[ru]_-{f_0}}}
\qquad\qquad
\vcenter{\xymatrix@!0@R=4em@C=8em{\R[f_1] \ar[r]^-{\pi_{1}} \ar[d]_{\langle r,\pi_{0}\rangle} & A_1 \ar[d]^-{\langle a,f_1\rangle } \\
\R[f_0]\times_{A_0}A_1 \ar[r]_-{\pi_{1}\times_{f_0}f_1} & A_0\times_{B_0}B_1}}
\]
In the square, the morphism $\langle r,\pi_{0}\rangle$ is in $\E$ by assumption. Furthermore, the morphism $\pi_{1}\times_{f_0}f_1$ is in $\E$ as a pullback of the extension $f_1$. It follows by (E3) that $\langle a,f_1\rangle\comp \pi_{1}$ is an extension, and so (E4) implies that $\langle a,f_1\rangle$ is in $\E$.
\end{proof} 

\begin{proposition}\label{Proposition-(E5)}
Let $\A$ be a category and $\E$ a class of arrows in $\A$ which satisfies the axioms (E1)--(E4). The following conditions are equivalent:
\begin{enumerate}
\item (E4) holds for the class $\E^{1}$, that is, if $g\comp f\in \E^1$ then $g\in \E^1$;
\item (E5) holds, that is, all split epimorphisms of extensions are in $\E^1$;
\item every split epimorphism of split epimorphisms, i.e., every diagram
\begin{equation}\label{Double-Split-Epi}
\vcenter{\xymatrix@=3em{A_{1} \ar@<-.5ex>[r]_-{f_{1}} \ar@<-.5ex>[d]_-{a} & B_1 \ar@<-.5ex>[d]_-{b} \ar@<-.5ex>[l]_-{\overline{f_{1}}}\\
A_{0} \ar@<-.5ex>[u]_-{\overline{a}} \ar@<-.5ex>[r]_-{f_{0}} & B_{0}, \ar@<-.5ex>[u]_-{\overline{b}} \ar@<-.5ex>[l]_-{\overline{f_{0}}}}}
\end{equation}
such that $f_{0}a =b f_{1}$, $\overline{f_{0}} b =a \overline{f_{1}}$, $\overline{b} f_{0} = f_{1}\overline{a}$, $\overline{a}\overline{f_{0}}=\overline{f_{1}}\overline{b}$ and $f_{0} \overline{f_{0}}=1_{B_{0}}$, $f_{1} \overline{f_{1}}=1_{B_{1}}$, $a \overline{a}=1_{A_{0}}$, $b \overline{b}=1_{B_{0}}$ is a double extension;
\item consider the diagram
\begin{equation}\label{LeftRight}
\vcenter{\xymatrix{\R[f_1] \ar[d]_{r} \ar@<.5ex>[r] \ar@<-.5ex>[r] & A_1 \ar[r]^{f_1} \ar[d]_{a} & B_1 \ar[d]^{b}\\
\R[f_0] \ar@<.5ex>[r] \ar@<-.5ex>[r] & A_0 \ar[r]_{f_0} & B_0}}
\end{equation}
in $\A$ with $a$, $b$, $f_1$ and $f_0$ in $\E$; the arrow $r$ is in $\E$ if and only if the right hand side square is in $\E^{1}$.
\end{enumerate}
\end{proposition}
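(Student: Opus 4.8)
The plan is to prove the cycle (i)\,\To\,(ii)\,\To\,(iii)\,\To\,(iv)\,\To\,(i), arranged so that only the step (iii)\,\To\,(iv) carries real content while the other three are essentially formal. For (i)\,\To\,(ii): by Proposition~\ref{Proposition-Extensions} the pair $(\Ext\A,\E^{1})$ again satisfies (E1)--(E3), so $\E^{1}$ contains all isomorphisms, and condition~(i) is precisely (E4) for $\E^{1}$. The elementary observation that ``(E1) and (E4) force every split epimorphism into the class'' then applies verbatim to $(\Ext\A,\E^{1})$: if $p\comp s=1$ in $\Ext\A$ then $p\comp s\in\E^{1}$ by (E1), whence $p\in\E^{1}$ by (E4); as a split epimorphism in $\Ext\A$ is exactly a split epimorphism of extensions, this is~(ii). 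For (ii)\,\To\,(iii): a diagram~\eqref{Double-Split-Epi} \emph{is} a split epimorphism of extensions, because its vertical legs $a$ and $b$ are split epimorphisms (hence in $\E$ by (E1) and (E4)), while $(f_{1},f_{0})\colon a\to b$ is a morphism of $\Ext\A$ (since $f_{0}a=bf_{1}$) split by $(\overline{f_{1}},\overline{f_{0}})$ (using $f_{1}\overline{f_{1}}=1_{B_{1}}$, $f_{0}\overline{f_{0}}=1_{B_{0}}$ and $\overline{f_{0}}b=a\overline{f_{1}}$); so~(ii) makes it a double extension.

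The step (iv)\,\To\,(i) is the one I would single out as the pleasant surprise. Let $g\comp f\in\E^{1}$ with $f\colon\alpha\to\beta$ and $g\colon\beta\to\gamma$ in $\Ext\A$; then $\alpha,\beta,\gamma\in\E$, and since the top and bottom edges of $g\comp f$ lie in~$\E$, axiom (E4) shows that the edges $g_{1},g_{0}$ of~$g$ lie in $\E$ too. Because being a double extension is a property of the underlying square, invariant under transposition (Proposition~\ref{Proposition-DIP-Extension}), I may apply~(iv) to the \emph{transposed} squares, taking kernel pairs in the vertical direction. This turns ``$g\comp f\in\E^{1}$'' into ``$s_{gf}\in\E$'' and the desired ``$g\in\E^{1}$'' into ``$s_{g}\in\E$'', where $s_{gf}\colon\R[\alpha]\to\R[\gamma]$ and $s_{g}\colon\R[\beta]\to\R[\gamma]$ are induced by $g_{1}f_{1}$ and $g_{1}$ on the kernel pairs of the vertical maps. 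Now $f_{1}$ induces $\tau\colon\R[\alpha]\to\R[\beta]$, and a direct check gives the factorisation $s_{gf}=s_{g}\comp\tau$; since $s_{gf}\in\E$, right cancellation~(E4) yields $s_{g}\in\E$, and hence $g\in\E^{1}$. The key is to pass to kernel pairs in the direction that puts $s_{g}$ \emph{last} in the composite, so that (E4) cancels $\tau$ rather than $s_{g}$.

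The remaining step (iii)\,\To\,(iv) is the main obstacle. By Lemma~\ref{Extension-Left-Right} the right square of~\eqref{LeftRight} lies in $\E^{1}$ exactly when the left one does, and in either case $r\in\E$; so the only thing to prove is that $r\in\E$ forces the left square into~$\E^{1}$, i.e.\ that the comparison $\R[f_{1}]\to\R[f_{0}]\times_{A_{0}}A_{1}$ lies in~$\E$. In this left square the horizontal maps $\pi_{0},\pi_{0}'$ are split (by the diagonals), but the vertical maps $r$ and~$a$ are only known to be in~$\E$, so (iii) cannot be invoked directly. My plan is the standard Mal'tsev device of making the vertical maps split as well by passing to their kernel pairs: the projections of $\R[r]\rightrightarrows\R[f_{1}]$ and $\R[a]\rightrightarrows A_{1}$ are split, so the squares assembled from the two kernel-pair directions are split epimorphisms of split epimorphisms and fall under~(iii), which declares them double extensions. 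The comparison for the original left square is then to be recovered by descending this information along the split kernel-pair projections, using right cancellation~(E4) and Lemma~\ref{Extension-Left-Right} once more. The delicate point---and the reason this implication, unlike the other three, is not formal---is exactly this descent: one must transport ``double extension'' \emph{down} along the kernel pair of~$a$ (whose projections are split, hence extensions, even though $a$ itself is not split), and it is precisely here that the weak split form~(iii) of the Mal'tsev axiom is indispensable.
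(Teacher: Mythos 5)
Your proof is correct and follows essentially the same route as the paper's: the same cycle (i)\,\To\,(ii)\,\To\,(iii)\,\To\,(iv)\,\To\,(i), with (iii)\,\To\,(iv) done by taking kernel pairs of the vertical maps of the left-hand square of~\eqref{LeftRight} so that (iii) applies and then transporting the conclusion back via Lemma~\ref{Extension-Left-Right} used twice---your ``descent'' is precisely the rotated application of that lemma---and (iv)\,\To\,(i) done via the induced morphisms on kernel pairs, the factorisation $s_{gf}=s_{g}\comp\tau$ and right cancellation (E4). The only differences are cosmetic: you spell out points the paper leaves implicit, such as $g_{1},g_{0}\in\E$ following from (E4) and the transposition-invariance of the double-extension property.
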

\begin{proof}
Since all isomorphisms of extensions are double extensions, we see that (i) implies (ii). Clearly (iii) is a special case of (ii). Now suppose that (iii) holds and consider a diagram \eqref{LeftRight} as in (iv). Lemma~\ref{Extension-Left-Right} automatically gives one direction, that is, if the right hand square is a double extension, then~$r$ is in~$\E$. Conversely, taking kernel pairs vertically of the left hand side square gives us a square as in~(iii). By assumption this square is a double extension. Using Lemma~\ref{Extension-Left-Right} twice we see that all squares in the diagram are double extensions.

Finally, suppose that (iv) holds and consider the morphisms~$f$ and~$g$ in $\Ext\A$ as in the diagram below.
\[
\xymatrix{\R[a] \ar[r]^-{r} \ar@<-.5ex>[d] \ar@<.5ex>[d] & \R[b] \ar[r]^-{s} \ar@<-.5ex>[d] \ar@<.5ex>[d] & \R[c] \ar@<-.5ex>[d] \ar@<.5ex>[d]\\
A_1 \ar[r]^-{f_1} \ar[d]_-{a} & B_1 \ar[r]^-{g_1} \ar[d]^-{b} & C_1 \ar[d]^-{c}\\
A_0 \ar[r]_-{f_0} & B_0 \ar[r]_-{g_0} & C_0}
\]
Assume that the composite $g\comp f$ is a double extension. Then by assumption $s\comp r$ is in $\E$. Axiom (E4) implies that $s$ is in $\E$, so~(iv) implies that $g$ is in $\E^{1}$.
\end{proof}

The axioms (E1)--(E5) ``go up'':

\begin{proposition}\label{(E1)--(E5)-go-up}
Let $\A$ be a category and $\E$ a class of arrows in $\A$. If~$(\A,\E)$ satisfies (E1)--(E5) then $(\Ext\A,\E^{1})$ satisfies the same conditions.
\end{proposition}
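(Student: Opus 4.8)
The conditions (E1)--(E3) for $(\Ext\A,\E^{1})$ are exactly the content of Proposition~\ref{Proposition-Extensions}, so the plan is to concentrate on (E4) and (E5). For (E4) I would simply observe that, for the pair $(\Ext\A,\E^{1})$, axiom (E4) is literally condition~(i) of Proposition~\ref{Proposition-(E5)}: it demands that $g\in\E^{1}$ whenever $g\comp f\in(\E^{1})^{1}=\E^{2}$. Since $(\A,\E)$ satisfies (E1)--(E4) and, by hypothesis, also (E5), that proposition gives (ii)\implies(i), which is precisely (E4) for $(\Ext\A,\E^{1})$. Hence $(\Ext\A,\E^{1})$ satisfies (E1)--(E4).

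The real work is (E5) for $(\Ext\A,\E^{1})$. Having just established (E1)--(E4) for this pair, I would apply Proposition~\ref{Proposition-(E5)} a second time, now to $(\Ext\A,\E^{1})$ itself; since its conditions (i)--(iv) are then equivalent, it suffices to verify its condition~(iii): every split epimorphism of split epimorphisms in $\Ext\A$ is a double $\E^{1}$-extension, i.e.\ an element of $\E^{2}$. Unwinding this in $\A$ via the isomorphism $\Arr^{3}\A\iso\Cub^{3}\A$ of Convention~\ref{Convention-Iso-Cubes-Arrows}, such a diagram is a $3$-cube in which two of the three directions consist entirely of split epimorphisms, while the third direction, arising from the objects of $\Ext\A$, consists of morphisms in $\E$. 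By Remark~\ref{Remark-Cubes} and Proposition~\ref{Proposition-DIP-Extension} it is then enough to show that for each nonempty $I\subseteq 3$ the comparison $A_{I}\to\lim_{J\subsetneq I}A_{J}$ lies in $\E$.

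The cases $|I|=1$ and $|I|=2$ should be routine. Every edge ($|I|=1$) is either a split epimorphism or a morphism in $\E$, hence in $\E$ (recall that (E1) and (E4) together make all split epimorphisms extensions). For the two-dimensional faces ($|I|=2$): the face lying in the two split directions is a split epimorphism of split epimorphisms, so it is a double extension by condition~(iii) of Proposition~\ref{Proposition-(E5)} for $(\A,\E)$ (which holds because (E5) does); each of the other two faces mixes a split direction with the $\E$-direction, so it is a split epimorphism of extensions and is a double extension by (E5) for $(\A,\E)$. In each case the relevant comparison to a pullback is therefore in $\E$.

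The main obstacle is the top comparison, $|I|=3$. Writing the cube as a morphism $\phi\colon P\to Q$ in the $\E$-direction, with $P$ and $Q$ the two parallel faces lying in the split directions (both double extensions, by the previous paragraph), I expect to identify $\lim_{J\subsetneq 3}A_{J}$ with the pullback of the comparison $c_{Q}\colon Q_{\{0,1\}}\to\lim_{K\subsetneq\{0,1\}}Q_{K}$ of $Q$ along the map induced by $\phi$ on the punctured-square limits, so that the comparison $A_{3}\to\lim_{J\subsetneq 3}A_{J}$ becomes the factorisation determined by the comparison $c_{P}$ of $P$ and the top $\E$-edge $\phi_{\{0,1\}}$. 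Both $c_{P}$ and $\phi_{\{0,1\}}$ are already in $\E$, but concluding that the induced factorisation is in $\E$ is exactly a relative Mal'tsev phenomenon and cannot follow from (E1)--(E4) alone: this is where the split structure in the remaining two directions must be spent. Concretely, I would pass to kernel pairs along one of the split directions, reducing the top face to a square of the shape occurring in condition~(iii)/(iv) of Proposition~\ref{Proposition-(E5)} for $(\A,\E)$, and then transport extension-hood back up the diagram by repeated use of Lemma~\ref{Extension-Left-Right} together with (E3) and (E4), exactly in the spirit of the arguments (iii)\implies(iv)\implies(i) given there. I anticipate that the only delicate point is the bookkeeping: keeping track of which squares are already known to be double extensions and in which direction the kernel pairs are taken. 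The categorical input is entirely supplied by (E5) for $(\A,\E)$ through Proposition~\ref{Proposition-(E5)} and Lemma~\ref{Extension-Left-Right}.
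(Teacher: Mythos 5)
Your treatment of (E1)--(E4) coincides with the paper's: (E1)--(E3) for $(\Ext\A,\E^{1})$ are Proposition~\ref{Proposition-Extensions}, and (E4) for $(\Ext\A,\E^{1})$ is condition~(i) of Proposition~\ref{Proposition-(E5)}, which holds because $(\A,\E)$ satisfies (E5). For (E5) your route then genuinely diverges from the paper's: the paper proves (E5) for $(\Ext\A,\E^{1})$ directly, for an \emph{arbitrary} split epimorphism of double extensions --- a cube with only one split direction, whose left and right faces lie in $\E^{1}$ --- whereas you first apply Proposition~\ref{Proposition-(E5)} to the pair $(\Ext\A,\E^{1})$ (legitimate, since you have just established (E1)--(E4) for it) and thereby reduce to its condition~(iii), a cube with \emph{two} split directions and one $\E$-direction. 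That reduction is sound, and your verifications for $|I|=1$ and $|I|=2$ are correct.

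The gap is in the only case that carries real content, $|I|=3$. Your identification of $\lim_{J\subsetneq 3}A_{J}$ as a pullback is correct, but you then view the cube as an arrow $\phi\colon{P\to Q}$ in the \emph{$\E$-direction}, so that the relevant square
\[
\vcenter{\xymatrix@C=5em{P_{\{0,1\}} \ar[r]^-{\phi_{\{0,1\}}} \ar[d]_-{c_{P}} & Q_{\{0,1\}} \ar[d]^-{c_{Q}} \\
{\lim_{K\subsetneq 2}P_{K}} \ar[r]_-{\phi_{*}} & {\lim_{K\subsetneq 2}Q_{K}}}}
\]
has no split pair of parallel arrows; (E5) for $(\A,\E)$ therefore cannot be applied to it, and your proposed repair is left as a sketch. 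As stated it does not quite parse --- neither direction of this square is a split direction, so ``passing to kernel pairs along one of the split directions'' has no direct meaning here --- and any honest completion must moreover prove that $\phi_{*}\in\E$, which Lemma~\ref{Extension-Left-Right} and condition~(iv) require and which you never address. (A completion does exist: take kernel pairs of the two $\E$-direction arrows $\phi_{\{0,1\}}$ and $\phi_{*}$; the resulting square of kernel pairs is a split epimorphism of split epimorphisms, so condition~(iii) for $(\A,\E)$ puts its comparison in $\E$, and condition~(iv) then yields the claim --- but this is considerably longer than necessary.) The missing idea, which is exactly the paper's one-line finish, is to take $\phi$ in a \emph{split} direction instead. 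Then $P$ and $Q$ are the two faces mixing the other split direction with the $\E$-direction, hence double extensions by (E5) for $(\A,\E)$, so $c_{P},c_{Q}\in\E$; the given sections are natural, so they induce a section of $\phi_{*}$, making $\phi_{\{0,1\}}$ and $\phi_{*}$ split epimorphisms; the displayed square is then a split epimorphism of extensions, and one more application of (E5) for $(\A,\E)$ puts the cube's comparison in $\E$. In the paper's more general setting only one direction is split, so there is no wrong direction to choose; in your reduced setting there are two good directions and one bad one, and you chose the bad one.
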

\begin{proof}
By Proposition~\ref{Proposition-(E5)}, Axiom (E5) for $(\A,\E)$ is equivalent to Axiom (E4) for~$(\Ext\A,\E^{1})$. To see that Axiom (E5) for $(\Ext\A,\E^{1})$ holds, consider a split epimorphism of double extensions in $\A$ such as the following left hand side cube and recall Remark~\ref{Remark-Cubes}.
\[
\vcenter{\xymatrix@!0@=3em{& C \ar@<-.5ex>[rr] \ar@{.>}[dd] && D \ar[dd] \ar@<-.5ex>[ll] \\
A \ar@<-.5ex>[rr] \ar[dd] \ar[ru] && B \ar[dd] \ar[ru] \ar@<-.5ex>[ll]\\
& C' \ar@<-.5ex>@{.>}[rr] && D' \ar@{.>}@<-.5ex>[ll]\\
A' \ar@<-.5ex>[rr] \ar@{.>}[ru] && B' \ar[ru] \ar@<-.5ex>[ll]}}
\qquad\qquad
\vcenter{\xymatrix@!0@R=4em@C=8em{A \ar@<-.5ex>[r] \ar[d] & B \ar[d] \ar@<-.5ex>[l]\\
A'\times_{C'}C \ar@<-.5ex>[r] & B'\times_{D'}D \ar@<-.5ex>[l]}}
\]
The arrows pointing to the right are split epimorphisms. By assumption, the cube's left and right hand side squares are double extensions; Axiom (E5) for~$(\A,\E)$ implies, moreover, that the front, back, top and bottom squares are also double extensions. Hence the induced right hand side comparison square to the pullback exists. It is a double extension by Axiom (E5) for~$(\A,\E)$.
\end{proof}

\subsection*{The $\E$-Kan property}
The Kan property is well known for simplicial sets and simplicial groups~\cite{Moore:Kan} and was used in~\cite{Carboni-Kelly-Pedicchio} to extend the characterisation of the Mal'tsev property in terms of simplicial objects from varieties to regular categories (proving a conjecture of M.~Barr~\cite{Barr-Grillet-vanOsdol}). We slightly adapt the definition to obtain a relative notion of $\E$-Kan simplicial objects.

\begin{definition}\label{Definition-Kan}
Let $\AA$ be a semi-simplicial object and consider $n\geq 2$ and $0\leq k\leq n$. The \defn{object of $(n,k)$-horns in $\AA$} is an object $A(n,k)$ together with arrows $a_{i}\colon{A(n,k)\to A_{n-1}}$ for $i\in \{0,\dots, n\}\setminus\{k\}$ satisfying
\[
\text{$\del_{i}\comp a_{j}=\del_{j-1}\comp  a_{i}$ for all $i<j$ with $i$, $j\neq k$}
\]
which is universal with respect to this property. We also define $A(1,0)=A(1,1)=A_{0}$.

A semi-simplicial object is \defn{$\E$-Kan} when all $A(n,k)$ exist and all comparison morphisms ${A_{n}\to A(n,k)}$ are in $\E$. In particular, the comparison morphisms to the $(1,k)$-horns are $\del_0\colon{A_1\to A(1,0)}$ and $\del_1\colon {A_1\to A(1,1)}$.
\end{definition}
%
%\begin{remark}
%If $\AA$ is a semi-simplicial object in a category $\A$ with all pullbacks, then all $A(n,k)$ exist. 
%\end{remark}

For simplicity, we assume that $\A$ has a terminal object so that every semi-simplicial object has a canonical augmentation. In fact this augmentation is only needed to allow a formulation in terms of cubes.

\begin{proposition}\label{Kan-as-Extension}
Let $\AA$ be a semi-simplicial object and $\arr_{n+1}\AA$ the $(n+1)$-cube induced by (the canonical augmentation of) $\AA$ for some $n\geq 1$. Then~$\AA$ satisfies the $\E$-Kan property at level $n$ (i.e., for all $(n,k)$-horns) if and only if the domains of all arrows of $n$-cubes in $\arr_{n+1}\AA$ (i.e., in all possible directions) are extensions.
\end{proposition}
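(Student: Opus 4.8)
The plan is to turn the horn combinatorics into cube combinatorics by means of the explicit formula $\arr_{n+1}\AA=\AA\comp F_{n+1}^{\op}$ from Lemma~\ref{Lemma-Two-arrs}. This formula puts the object $A_{|S|-1}$ at each vertex $S\subseteq n+1$ of the cube $\arr_{n+1}\AA$, and it identifies the edge $A_{S\cup\{i\}}\to A_{S}$ (for $i\notin S$) with the face $\del_{p}$, where $p=|\{j\in S:j<i\}|$ is the position at which $i$ sits inside $S\cup\{i\}$. In particular the $n+1$ edges leaving the top vertex $A_{n+1}=A_{n}$ are precisely the faces $\del_{0},\dots,\del_{n}\colon{A_{n}\to A_{n-1}}$. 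Fixing a direction $i\in n+1$, Convention~\ref{Convention-Iso-Cubes-Arrows} identifies the domain $n$-cube $D^{(i)}$ of $\arr_{n+1}\AA$ with the facet $\{S:i\in S\}$, which after the reindexing $S\mapsto S\setminus\{i\}$ becomes the $n$-cube $S'\mapsto A_{S'\cup\{i\}}$ on the index set $(n+1)\setminus\{i\}$.

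The key step is to read off, via Proposition~\ref{Proposition-DIP-Extension}(ii), exactly which morphisms must lie in $\E$ for $D^{(i)}$ to be an $n$-fold extension, and to match them with horn comparisons. For a nonempty $I\subseteq(n+1)\setminus\{i\}$ the relevant comparison is $A_{|I|}=A_{I\cup\{i\}}\to\lim_{J\subsetneq I}A_{J\cup\{i\}}$. Just as in the proof of Theorem~\ref{Theorem-Resolution-is-Cube}, the sets $J$ with $|I|-2\leq|J|\leq|I|-1$ are initial in the indexing poset, so this limit is computed over the $|I|$ facets obtained by deleting a single $j\in I$, together with their pairwise overlaps. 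Since $i$ itself is never deleted, the deleted edges are the faces $\del_{p_{j}}$ with $p_{j}$ the position of $j$ in $I\cup\{i\}$, and the one face that is omitted is $\del_{p_{i}}$ with $p_{i}$ the position of $i$ in $I\cup\{i\}$. Therefore this limit is exactly the horn object $A(|I|,p_{i})$ and the comparison is exactly the horn comparison $A_{|I|}\to A(|I|,p_{i})$. Taking $I=(n+1)\setminus\{i\}$ gives $p_{i}=i$, so the top comparison of $D^{(i)}$ is precisely the $(n,i)$-horn comparison $A_{n}\to A(n,i)$.

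This dictionary yields the equivalence. If every domain cube is an extension then, reading off the top comparison of $D^{(i)}$ as $i$ runs through $\{0,\dots,n\}$, all $(n,i)$-horn comparisons lie in $\E$, giving the $\E$-Kan property at level $n$. Conversely, as $i$ and $I$ vary the comparisons above realise every horn comparison $A_{m}\to A(m,k)$ with $1\leq m\leq n$ and $0\leq k\leq m$; hence once these all lie in $\E$, every comparison of every $D^{(i)}$ lies in $\E$ and each $D^{(i)}$ is an extension by Proposition~\ref{Proposition-DIP-Extension}. In other words the domain cubes being extensions corresponds exactly to the $\E$-Kan property in all degrees up to $n$, the genuinely new content at the top dimension being the level-$n$ horns; to keep the statement localised at level $n$ I would run this as an induction on $n$, so that the sub-comparisons of the domain cubes — the horns of degree $m<n$ — are already supplied by the previous stage (the codomain $n$-cube being common to all directions by Lemma~\ref{Lemma-Codomains}). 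The step I expect to need the most care is the index bookkeeping inside the initiality reduction: verifying that deleting $j\in I$ corresponds to $\del_{p_{j}}$ and that $i$ is the unique index never deleted, so that the retained direction $i$ matches the omitted face of the horn.
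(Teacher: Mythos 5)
Your proposal is correct and follows essentially the same route as the paper's proof: you identify each domain $n$-cube with the sub-diagram containing all faces except one $\del_k$, apply the initiality argument of Theorem~\ref{Theorem-Resolution-is-Cube} to recognise the limit of that sub-diagram minus its initial vertex as the $(n,k)$-horn object, and invoke induction on $n$ to handle the lower-dimensional comparisons. Your explicit index bookkeeping (the positions $p_i$, $p_j$) and your observation that the domain cubes really encode the Kan property at all levels up to $n$ just make precise what the paper compresses into the phrase ``by induction on $n$''.
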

\begin{proof}
A domain of any arrow of $n$-cubes in $\arr_{n+1}\AA$ is given by the $n$-subcube which involves all faces $\del_i\colon {A_n\to A_{n-1}}$ except for one particular $\del_k$. In the same way as in the proof of Theorem~\ref{Theorem-Resolution-is-Cube}, we see that the limit of the subdiagram of this $n$-cube without the initial object $A_n$ is exactly the $(n,k)$-horn object. Therefore, by induction on $n$, the $\E$-Kan property holds for the $(n,k)$-horn object if and only if that particular cube is an extension.
\end{proof}

Using Theorem~\ref{Theorem-Resolution-is-Cube} this gives us in particular

\begin{corollary}\label{Corollary-A^{-}-is-E-resolution}
Let $(\A,\E)$ satisfy (E1)--(E4). For any $\E$-semi-simplicial object~$\AA$ which is $\E$-Kan, the associated augmented semi-simplicial object~$\AA^{-}$ is an $\E$-resolu\-tion.\noproof
\end{corollary}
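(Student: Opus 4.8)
The plan is to deduce this corollary by combining Corollary~\ref{Corollary-A^{-}-is-E-resolution}'s two main ingredients: the characterisation of resolutions via extensions (Theorem~\ref{Theorem-Resolution-is-Cube}) and the translation of the $\E$-Kan property into a statement about the domains of the induced cubes (Proposition~\ref{Kan-as-Extension}). The key observation is that $\AA^{-}$ is obtained from $\AA$ by shifting everything up by one, so the $(n+1)$-fold arrow $\arr_{n+1}\AA$ induced by $\AA$ contains, as its various domains, precisely the $n$-fold arrows that one needs to understand $\arr_{n}\AA^{-}$. I will exploit this relationship directly.

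First I would recall that, by Theorem~\ref{Theorem-Resolution-is-Cube}, it suffices to show that $\arr_{n}\AA^{-}$ is an $n$-fold extension for all $n\geq 1$. To identify $\arr_{n}\AA^{-}$ inside $\arr_{n+1}\AA$, I would use the description of $\AA^{-}$ from Notation~\ref{Notation-A^{-}} together with the alternative description of the functors $\arr_{n}$ via $\Fun(F_{n}^{\op},-)$ from Lemma~\ref{Lemma-Two-arrs}. The shift $A^{-}_{m-1}=A_{m}$, $\del^{-}_i=\del_{i+1}$ means that $\arr_{n}\AA^{-}$ is exactly the $n$-subcube of $\arr_{n+1}\AA$ obtained by restricting to those faces $\del_i\colon{A_n\to A_{n-1}}$ with $i\neq 0$, i.e.\ the $n$-cube that leaves out the direction ``$\del_0$''. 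This is precisely one of the domain $n$-cubes described in Proposition~\ref{Kan-as-Extension}, corresponding to the horn index $k=0$.

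Next, since $\AA$ is $\E$-Kan, Proposition~\ref{Kan-as-Extension} tells us that for each $n\geq 1$ the domains of all arrows of $n$-cubes in $\arr_{n+1}\AA$ are extensions; in particular the domain corresponding to omitting $\del_0$ is an $n$-fold extension. By the identification in the previous step, this domain \emph{is} $\arr_{n}\AA^{-}$, so $\arr_{n}\AA^{-}$ is an $n$-fold extension. One small point to handle is that Proposition~\ref{Kan-as-Extension} is phrased using the canonical augmentation coming from the terminal object, whereas $\AA^{-}$ carries the honest augmentation $\del^{-}_0=\del_1\colon{A_1\to A_0}$; I would note that the hypothesis that $\AA$ is an $\E$-\emph{semi-simplicial} object (all faces in $\E$) guarantees $\del_1\in\E$, so the relevant augmented $n$-fold arrows genuinely have extensions for their lowest faces and the shift is compatible. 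Applying Theorem~\ref{Theorem-Resolution-is-Cube} to $\AA^{-}$ then yields that $\AA^{-}$ is an $\E$-resolution.

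The main obstacle I anticipate is bookkeeping rather than conceptual: one must verify carefully that the shift operation $(-)^{-}$ on semi-simplicial objects corresponds, under the cube identifications fixed in Convention~\ref{Convention-Iso-Cubes-Arrows}, to picking out exactly the $k=0$ domain subcube of $\arr_{n+1}\AA$, and that this matches the face relabelling $\del^{-}_i=\del_{i+1}$ dictated by the functor $F_n$. Once this indexing is pinned down using Lemma~\ref{Lemma-Two-arrs}, everything else is a direct invocation of Proposition~\ref{Kan-as-Extension} followed by Theorem~\ref{Theorem-Resolution-is-Cube}, which is why the corollary is stated with \texttt{\textbackslash noproof}.
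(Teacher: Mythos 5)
Your proof is correct and matches the paper's intended argument: the corollary is stated without proof precisely because, as you observe, $\arr_{n}\AA^{-}$ is the domain of $\arr_{n+1}\AA$ in the direction omitting $\del_{0}$, so Proposition~\ref{Kan-as-Extension} and Theorem~\ref{Theorem-Resolution-is-Cube} combine immediately to give the result. One remark: your ``small point'' about the canonical augmentation is in fact vacuous, since the $k=0$ domain subcube involves only the subsets containing $0$ and hence never reaches the corner $A_{-1}=1$, so the identification with $\arr_{n}\AA^{-}$ holds on the nose and does not need $\del_{1}\in\E$ for that purpose (that hypothesis is instead what gives $\E$-exactness of $\AA^{-}$ at level $-1$, equivalently it is part of the $\E$-Kan property at level $1$).
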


As a first illustration of what the relative Kan property is useful for, we show that a contractible augmented $\E$-semi-simplicial object $\AA$ which is also $\E$-Kan is always an $\E$-resolution. For this we make an observation about the existence of simplicial kernels. 

\begin{lemma}\label{Lemma-Simplicial-Kernel}
If $\AA$ is a resolution up to level $n$ then $K_{n+1}\AA$ exists.
\end{lemma}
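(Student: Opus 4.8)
The plan is to induct on $n$, descending one dimension at each step by passing to the category $\Ext\A$ of extensions, where simplicial kernels are computed degreewise. The base case is immediate: a resolution up to level $0$ says exactly that $\del_{0}\colon{A_{0}\to A_{-1}}$ lies in $\E$, and $K_{1}\AA$ is then the kernel pair of $\del_{0}$, which exists by (E2). For the inductive step I would view the morphism $\del\colon{\AA^{-}\to\AA}$ as an augmented semi-simplicial object of arrows $\BB$ in the sense of Remark~\ref{Remark-Simp-Object-of-Arrows}; since the relevant faces $\del_{0}$ are extensions, $\BB$ is an augmented semi-simplicial object in $\Ext\A$, and recall that $(\Ext\A,\E^{1})$ again satisfies (E1)--(E3) by Proposition~\ref{Proposition-Extensions}, so the lemma applies there one level lower.

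First I would record the truncated form of Corollary~\ref{Corollary-Minus}: if $\AA$ is a resolution up to level $n$ then $\BB$ is an $\E^{1}$-resolution up to level $n-1$ in $\Ext\A$. This is the level-by-level reading of the equivalence behind Corollary~\ref{Corollary-Minus}, matching the $(n+1)$-fold extension $\arr_{n+1}\AA$ with the $n$-fold $\E^{1}$-extension $\arr_{n}\BB$. The inductive hypothesis applied to $\BB$ then gives that $K_{n}\BB$ exists in $\Ext\A$. As pullbacks, and hence the simplicial kernels built from them, are computed degreewise in $\Ext\A$, the domain of the arrow $K_{n}\BB$ is a simplicial kernel $K_{n}(\AA^{-})$ taken in $\A$; in particular $K_{n}(\AA^{-})$ exists. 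Because $\del^{-}_{i}=\del_{i+1}$, this is precisely the simplicial kernel of the family $(\del_{1},\dots,\del_{n})\colon{A_{n}\to A_{n-1}}$ obtained from $(\del_{0},\dots,\del_{n})$ by discarding $\del_{0}$.

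It then remains to adjoin the missing face $\del_{0}$. Writing $\phi_{n}=(\del_{0},\dots,\del_{n})\colon{A_{n}\to K_{n}\AA}$ for the comparison morphism, which lies in $\E$ by hypothesis, I would check directly that the defining relations of the simplicial kernel exhibit $K_{n+1}\AA$ as the pullback
\[
K_{n+1}\AA\iso K_{n}(\AA^{-})\times_{K_{n}\AA}A_{n}
\]
of $\phi_{n}$ along the canonical arrow $K_{n}(\AA^{-})\to K_{n}\AA$ induced by $\del_{0}$ (this arrow being the structure map of $K_{n}\BB$). Since $\phi_{n}\in\E$, axiom (E2) guarantees that this pullback exists, and with it $K_{n+1}\AA$.

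The main obstacle is the purely combinatorial bookkeeping of the two diagram identifications. The first is the truncated Corollary~\ref{Corollary-Minus}, for which I would reuse the translation between the simplicial-kernel diagrams of $\AA$ and the cubes $\arr_{m}\AA$ already exploited in Theorem~\ref{Theorem-Resolution-is-Cube}. The second, more delicate, is the pullback description of $K_{n+1}\AA$: one must match the defining cone of $K_{n+1}\AA$ (all the relations $\del_{i}k_{j}=\del_{j-1}k_{i}$) against the cone assembled from a point of $K_{n}(\AA^{-})$ together with a $\phi_{n}$-preimage of the prescribed faces of the adjoined vertex $k_{0}$, using the simplicial identities to see that those faces land in $K_{n}\AA$. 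Once these two identifications are secured, the existence of $K_{n+1}\AA$ follows formally, the only categorical input being (E2) applied to the extension $\phi_{n}$.
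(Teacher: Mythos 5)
Your proof is correct, but it takes a genuinely different route from the paper's. The paper deduces the lemma from its cube machinery: since $\AA$ is a resolution up to level $n$, Theorem~\ref{Theorem-Resolution-is-Cube} and Lemma~\ref{Lemma-Codomains} show that all codomains of the $(n+2)$-cube $\arr_{n+2}\AA$ coincide with the single extension $\arr_{n+1}\AA$, and an induction in the style of Proposition~\ref{Proposition-DIP-Extension} shows that any $(n+2)$-cube all of whose codomains are extensions admits the limit $\lim_{J\subsetneq n+2}A_{J}$, which (by the initial-subcategory argument in the proof of Theorem~\ref{Theorem-Resolution-is-Cube}) is precisely $K_{n+1}\AA$. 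You instead run the induction on the simplicial side, descending to $\BB=\del\colon{\AA^{-}\to\AA}$ in $(\Ext\A,\E^{1})$, and your key identification
\[
K_{n+1}\AA\iso K_{n}(\AA^{-})\times_{K_{n}\AA}A_{n},
\]
the pullback of the extension $\langle\del_{0},\dots,\del_{n}\rangle$ along the arrow induced by $\del_{0}$, is correct: the matching of cones you sketch does go through via the simplicial identities, and existence then follows from (E2). Your route buys an explicit recursive pullback formula for the simplicial kernels, which makes the role of (E2) transparent and makes visible \emph{why} $K_{n}\BB$ is computed degreewise---a fact the paper itself needs, and asserts with similar brevity, in the proof of Proposition~\ref{Proposition-Contractible-plus-Kan-is-Resolution}; the paper's route buys brevity, given the machinery already in place, and a purely cubical statement that never mentions the simplicial structure. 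One point in your write-up does need care: as stated, your inductive hypothesis only yields the \emph{existence} of $K_{n}\BB$ as a limit in the full subcategory $\Ext\A$, and that alone does not justify the degreewise claim, since a limit in a full subcategory need not be a limit in the ambient category $\Arr\A$. The fix is to strengthen the inductive statement so that it carries the construction along: $K_{n+1}\AA$ exists \emph{and is the stated pullback}. Then, one level down, $K_{n}\BB$ is by construction a pullback in $\Ext\A$ of a double extension, hence computed degreewise as in $\Arr\A$ by the remark following Proposition~\ref{Proposition-Extensions}, so its domain and codomain are indeed the simplicial kernels $K_{n}(\AA^{-})$ and $K_{n}\AA$ formed in $\A$, exactly as your argument requires.
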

\begin{proof}
This follows from Lemma~\ref{Lemma-Codomains} and the following property of higher cubes, which is proved inductively as in Proposition~\ref{Proposition-DIP-Extension}: if all codomains in an $(n+2)$-cube $A$ are extensions, then the limit $\lim_{J\subsetneq n+2}A_{J}$ exists.
\end{proof}

\begin{proposition}\label{Proposition-Contractible-plus-Kan-is-Resolution}
Let $(\A,\E)$ satisfy (E1)--(E4). An augmented $\E$-semi-simpli\-cial object in $(\A,\E)$ which is contractible and satisfies the $\E$-Kan property is an~$\E$-resolution.
\end{proposition}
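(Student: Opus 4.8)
The plan is to avoid showing that any particular square is a double extension (which would seem to force the Mal'tsev axiom (E5), not available here) and instead to exploit that $\AA$ is a \emph{retract} of the shifted object $\AA^{-}$. First I would record that, by Remark~\ref{Remark-Contractible}, contractibility of $\AA$ means precisely that $\del\colon{\AA^{-}\to\AA}$ is a split epimorphism of augmented semi-simplicial objects; writing $\sigma\colon{\AA\to\AA^{-}}$ for a section, so that $\del\comp\sigma=1_{\AA}$, this exhibits $\AA$ as a retract of $\AA^{-}$ in $\Ss\A$. Since $\AA$ is an $\E$-semi-simplicial object which is $\E$-Kan, Corollary~\ref{Corollary-A^{-}-is-E-resolution} gives that $\AA^{-}$ is an $\E$-resolution, so by Theorem~\ref{Theorem-Resolution-is-Cube} each $\arr_{n}\AA^{-}$ is an $n$-fold extension. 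Applying the functor $\arr_{n}$ of Definition~\ref{Definition-Cube} to this retract shows that $\arr_{n}\AA$ is a retract of $\arr_{n}\AA^{-}$ in $\Arrn\A$. Thus it suffices to prove that (under (E1)--(E4)) extensions are closed under retracts, and then to conclude by Theorem~\ref{Theorem-Resolution-is-Cube} once more.

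The heart of the matter is an elementary observation valid already under (E1), (E3) and (E4): the base class $\E$ is closed under retracts. Indeed, if $f$ is a retract of $g\in\E$ via $i$, $r$ on domains (with $r\comp i=1$) and $j$, $s$ on codomains (with $s\comp j=1$), then $s$ is a split epimorphism and so lies in $\E$ by (E1) and (E4); hence $f\comp r=s\comp g$ lies in $\E$ by (E3), and a final application of (E4) yields $f\in\E$. I would then transfer this to cubes through Proposition~\ref{Proposition-DIP-Extension}: a retract of $n$-cubes $A$, $B$ induces, for every $\emptyset\neq I\subseteq n$, a retract in $\Arr\A$ between the comparison morphism $A_{I}\to\lim_{J\subsetneq I}A_{J}$ and the comparison morphism $B_{I}\to\lim_{J\subsetneq I}B_{J}$, since forming these limits is functorial. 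As $\arr_{n}\AA^{-}$ is an extension, its comparison morphisms lie in $\E$, whence so do those of $\arr_{n}\AA$ by retract-closure of $\E$. The point worth stressing is that this argument stays entirely at the base level $(\A,\E)$ and never invokes (E4) for the class $\E^{1}$, which is exactly where (E5) would otherwise be needed.

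The one delicate point is the existence of the limits $\lim_{J\subsetneq I}(\arr_{n}\AA)_{J}$ required by Proposition~\ref{Proposition-DIP-Extension}, since idempotents need not split and one cannot simply transport limits along a retract. I would handle this by an induction on $n$ carried out in parallel with the retract argument: assuming $\AA$ is a resolution up to level $n-1$, Lemma~\ref{Lemma-Simplicial-Kernel} guarantees that $K_{n}\AA$ exists, while the lower simplicial kernels exist by the inductive hypothesis, so every comparison morphism of $\arr_{n}\AA$ is actually defined. Each such morphism, being a retract of the corresponding morphism of the $n$-fold extension $\arr_{n}\AA^{-}$, then lies in $\E$, and hence $\arr_{n}\AA$ is an $n$-fold extension. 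I expect this bookkeeping of the existence of simplicial kernels to be the only genuine obstacle; the conceptual move---replacing a double-extension (hence (E5)-type) argument by closure of the \emph{base} class $\E$ under retracts---is what makes the proof go through using only (E1)--(E4).
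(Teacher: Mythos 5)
Your proof is correct, and it is essentially the paper's own argument in a more conceptual packaging, so the comparison is instructive. The paper runs the same induction (base case: $\del_{0}\colon{A_{0}\to A_{-1}}$ is a face of the $\E$-semi-simplicial object, hence in $\E$; inductive step: $K_{n+1}\AA$ exists by Lemma~\ref{Lemma-Simplicial-Kernel}), but proves the step directly on one diagram: contractibility makes $\del\colon{\AA^{-}\to\AA}$ a split epimorphism of augmented $\E$-semi-simplicial objects, so the induced morphism $r\colon{A(n+2,0)\to K_{n+1}\AA}$ between the limits is split epic and hence in $\E$ by (E1) and (E4); by (E3) the composite of $r$ with the $\E$-Kan comparison ${A_{n+2}\to A(n+2,0)}$ is in $\E$; this composite equals $\langle\del_{0},\dots,\del_{n+1}\rangle\comp\del_{0}$, and (E4) cancels $\del_{0}$. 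If you unroll your retract-closure lemma at the top corner of the cube---taking for $g$ the horn comparison ${A_{n+2}\to A(n+2,0)}$, for your codomain retraction $s$ the paper's split epimorphism $r$, and for your domain retraction the face $\del_{0}$---you reproduce exactly these three steps, so the computational core coincides. What you do differently is to isolate it as a reusable fact ($\E$ is closed under retracts under (E1), (E3), (E4)), propagate it to cubes by functoriality of the punctured-cube limits of Proposition~\ref{Proposition-DIP-Extension}, and quote Corollary~\ref{Corollary-A^{-}-is-E-resolution} together with Theorem~\ref{Theorem-Resolution-is-Cube} to know that the comparisons of $\arr_{n}\AA^{-}$ (which are precisely the horn comparisons) lie in $\E$, rather than invoking the Kan hypothesis by hand. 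Both proofs hit, and resolve identically, the one real subtlety, which you correctly flagged: limits cannot be transported along retracts since idempotents need not split, so the existence of the $K_{k}\AA$ must be supplied by the parallel induction via Lemma~\ref{Lemma-Simplicial-Kernel}. Your packaging buys a clean quotable lemma and makes it transparent that the argument never leaves the base level $(\A,\E)$; the paper's version needs no general functoriality discussion because it exhibits the single split epimorphism it uses. The only thing missing from your write-up is an explicit base case for the induction, which is immediate: $\del_{0}\colon{A_{0}\to A_{-1}}$ lies in $\E$ because $\AA$ is an $\E$-semi-simplicial object---or, entirely in your spirit, because it is a retract of $\del_{1}\colon{A_{1}\to A_{0}}$.
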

\begin{proof} 
As $\AA$ is an $\E$-semi-simplicial object, in particular the morphism
\[
\del_0\colon {A_0\to A_{-1}=K_0\AA}
\]
is in $\E$, so $\AA$ is an $\E$-resolution at level 0.

Now let $\AA$ be a resolution up to level $n$.
By Lemma~\ref{Lemma-Simplicial-Kernel}, we can assume inductively that the simplicial kernel $K_{n+1}\AA$ exists.  So we can consider the diagram
\[
\xymatrix@=3em{A_{n+2} \ar[d]_-{\del_{0}} \ar[r] & A(n+2,0) \ar@<1.5ex>[r]^-{ a_{1}} \ar@<-1.5ex>[r]_-{ a_{n+2}}^-{\vdots} \ar@<-.5ex>[d]_-{r}  & A_{n+1} \ar@<1.5ex>[r]^-{\del_{1}} \ar@<-1.5ex>[r]_-{\del_{n+1}}^-{\vdots} \ar@<-.5ex>[d]_-{\del_{0}} & A_{n} \ar@<-.5ex>[d]_-{\del_{0}}\\
A_{n+1} \ar[r]_-{\langle\del_{0},\ldots,\del_{n+1}\rangle} & K_{n+1}\AA \ar@<1.5ex>[r]^-{k_{0}} \ar@<-.5ex>[u] \ar@<-1.5ex>[r]_-{k_{n+1}}^-{\vdots} & A_{n} \ar@<1.5ex>[r]^-{\del_{0}} \ar@<-1.5ex>[r]_-{\del_{n}}^-{\vdots}\ar@<-.5ex>[u]_-{\sigma_{-1}} & A_{n-1} \ar@<-.5ex>[u]_-{\sigma_{-1}} }
\] %%%% let op met de index -1 in het diagram????
in which $\K_{n+1}\AA$ and $A(n+2,0)$ are the simplicial kernels of the given morphisms. As $\del\colon{\AA^{-}\to \AA}$ is a split epimorphism of augmented $\E$-semi-simplicial objects by Remark~\ref{Remark-Contractible}, the induced morphism $r$ between the limits is split epic, and thus an extension. In fact, $r=K_{n}\BB$, since $\BB$ is a resolution up to level $n-1$, so this simplicial kernel can be constructed by going to cubes as in Lemma~\ref{Lemma-Simplicial-Kernel}. It is pointwise because pullbacks of double extensions are pointwise in $\Ext \A$. The comparison morphism ${A_{n+2}\to A(n+2,0)}$ is an extension as $\AA$ is $\E$-Kan, so the composite $\del_0\comp\langle\del_{0},\ldots,\del_{n+1}\rangle$ is an extension by (E3). Therefore $\langle\del_{0},\ldots,\del_{n+1}\rangle$ is an extension by the cancellation property~(E4).
\end{proof}

Now we prove that, with a small extra assumption, the relative Mal'tsev axiom~(E5) is equivalent to \emph{every} \emph{quasi}-simplicial object being $\E$-Kan. 

\begin{remark}
Notice that any (quasi)-simplicial object is automatically an~$\E$-semi-simplicial object, as all split epimorphisms are in $\E$. However, this does not automatically extend to \emph{augmented} (quasi)-simplicial objects.
\end{remark}

\begin{proposition}\label{Proposition-Relative-Maltsev-Converse}
If $(\A,\E)$ satisfies (E1)--(E5) then every quasi-simpli\-cial object in $\A$ satisfies the $\E$-Kan property.
\end{proposition}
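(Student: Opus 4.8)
The plan is to prove the statement by induction on $n$, where the inductive hypothesis $P(n-1)$ asserts that \emph{for every} pair $(\A,\E)$ satisfying (E1)--(E5) every quasi-simplicial object is $\E$-Kan at level $n-1$. The engine is Proposition~\ref{Kan-as-Extension}: the $\E$-Kan property of $\AA$ at level $n$ is equivalent to the statement that the $n+1$ ``near-faces'' of the cube $\arr_{n+1}\AA$ are $n$-fold extensions, where the near-face $D_k$ (the domain of the arrow of $n$-cubes in the $k$-th direction) is the $n$-subcube built from all faces $\del_i$ except $\del_k$. So it suffices to show that $D_0,\dots,D_n$ are all $n$-fold extensions.

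For the base case $n=1$ the two horn comparisons are simply the faces $\del_0,\del_1\colon{A_1\to A_0}$; as any quasi-simplicial object is an $\E$-semi-simplicial object (its faces are split epimorphisms, hence in $\E$ by (E1) and (E4)), this is immediate.

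For the inductive step I would pass to the arrow category. By Proposition~\ref{(E1)--(E5)-go-up} the pair $(\Ext\A,\E^{1})$ again satisfies (E1)--(E5). Viewing the morphism $\del\colon{\AA^{-}\to\AA}$ as an augmented quasi-simplicial object of arrows $\BB$ (Remark~\ref{Remark-Simp-Object-of-Arrows}), each object of $\BB$ is one of the faces $\del_0$, a split epimorphism and hence an extension, so that $\BB$ is a genuine quasi-simplicial object in $\Ext\A$. Applying $P(n-1)$ to $\BB$ over $(\Ext\A,\E^{1})$ yields that $\BB$ is $\E^{1}$-Kan at level $n-1$. Under the identification $\arr_{n+1}\AA\iso\arr_{n}\BB$ of diagram~\eqref{Truncation}, in which the $\del_0$-direction of the $(n+1)$-cube is absorbed as the ambient arrow-direction of $\Ext\A$, Proposition~\ref{Kan-as-Extension} for $\BB$ translates this into the assertion that the near-faces $D_1,\dots,D_n$ of $\arr_{n+1}\AA$ are $n$-fold $\E$-extensions (an $(n-1)$-fold $\E^{1}$-extension being, by definition, the same thing as an $n$-fold $\E$-extension).

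The one near-face this reduction does not reach is the outer horn $D_0$, precisely because the $\del_0$-direction was used up in forming $\Ext\A$; supplying it is the main obstacle. I would obtain it by the dual construction: reversing the simplicial order is an automorphism $\tau$ of the augmented quasi-simplicial category, since it merely permutes faces and degeneracies and preserves the face--face and face--degeneracy identities, the only relations imposed in $\Delta_q$. Thus $\AA^{\tau}$ is again quasi-simplicial, with $\del_i$ interchanged with $\del_{n-i}$, and running the previous paragraph for $\AA^{\tau}$ produces $D_1^{\tau},\dots,D_n^{\tau}$ as extensions; here $D_n^{\tau}$ uses exactly the faces $\del_1,\dots,\del_n$ and so coincides, up to a permutation of directions, with $D_0$ for $\AA$. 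Since being an extension is a property of the underlying cube invariant under automorphisms of $2^{n}$ (Remark~\ref{Remark-Cubes}), this transfer is legitimate, and together with $D_1,\dots,D_n$ it exhausts all horns, closing the induction via Proposition~\ref{Kan-as-Extension}. The delicate points will be the bookkeeping matching horn indices, faces and cube directions through $\arr_{n+1}\AA\iso\arr_{n}\BB$, and checking carefully that order-reversal genuinely preserves the quasi-simplicial structure so that the outer horn $D_0$ is secured by the same mechanism as the inner ones.
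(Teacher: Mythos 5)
Your proposal is correct and takes essentially the same route as the paper: the base case via split epimorphisms lying in $\E$, then an induction (quantified over all pairs satisfying the axioms) that passes to the quasi-simplicial object of arrows $\BB=\del\colon{\AA^{-}\to\AA}$ in $(\Ext\A,\E^{1})$ using Proposition~\ref{(E1)--(E5)-go-up} and Proposition~\ref{Kan-as-Extension}, which yields every horn-domain except the one omitting $\del_{0}$. The paper dismisses that remaining case with the single phrase ``follows by symmetry''; your order-reversal automorphism of $\Delta_{q}$, combined with the invariance of the extension property under automorphisms of $2^{n}$ (Remark~\ref{Remark-Cubes}), is exactly a correct spelling-out of that symmetry.
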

\begin{proof}
For every quasi-simplicial object $\AA$, the $\E$-Kan property for $A(1,k)$ just says that $\del_{0}$ and $\del_{1}\colon{A_{1}\to A_{0}}$ are in $\E$, which is automatically satisfied thanks to (E1) and (E4), which imply that all split epimorphisms are in $\E$.

Now assume that the $\E$-Kan property holds up to level $n$ for all~$(\A,\E)$ which satisfy (E1)--(E5). Let $\AA$ be a quasi-simplicial object in~$\A$ and write~$\BB=\del\colon{\AA^{-}\to\AA}$ for the induced quasi-simplicial object in $\Ext\A$. Axiom~(E5) for $(\A,\E)$ ensures that $(\Ext\A,\E^1)$ also satisfies (E4) and (E5) (Proposition~\ref{(E1)--(E5)-go-up}). So by assumption, $\BB$ is $\E^{1}$-Kan up to level~$n$. By Proposition~\ref{Kan-as-Extension} this means that the domains of the $(n+1)$-cube $\arr_{n+1}\BB$ in $\Ext\A$ are $n$-fold $\E^1$-extensions. Hence in the $(n+2)$-cube $\arr_{n+2}\AA$ in $\A$, certain domains are $(n+1)$-fold \E-extensions. This almost shows that~$\AA$ is $\E$-Kan at level $n+1$: the property holds for all domains but one. The missing case follows by symmetry.
\end{proof}

We can also prove a converse of Proposition~\ref{Proposition-Relative-Maltsev-Converse}, however we now need $\A$ to have all simplicial kernels so that truncation of simplicial objects has a right adjoint.

\begin{proposition}\label{Proposition-Relative-Maltsev}
Let $\A$ be a category with simplicial kernels and $\E$ a class of morphisms in $\A$ which satisfies (E1)--(E4). If every simplicial object in $\A$ has the $\E$-Kan property then $(\A,\E)$ satisfies (E5).
\end{proposition}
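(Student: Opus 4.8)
The plan is to deduce (E5) by verifying, for an arbitrary split epimorphism of extensions, that the comparison morphism $\langle a,f_1\rangle\colon{A_1\to A_0\times_{B_0}B_1}$ lies in $\E$; by Proposition~\ref{Proposition-(E5)} this is exactly what is needed, and one is free to replace (E5) by any of the equivalent conditions listed there. I would produce this comparison as a Kan horn-filling comparison of a suitable simplicial object and then invoke the hypothesis. The observation that guides the whole construction is that the pullback $A_0\times_{B_0}B_1$ can never occur as a horn object $A(n,k)$ of a simplicial object \emph{in $\A$}, since by Definition~\ref{Definition-Kan} such horn objects are limits of copies of a single object $A_{n-1}$ over a single object $A_{n-2}$. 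It does occur naturally in $\Ext\A$: a simplicial object in $\Ext\A$ is a morphism of simplicial objects in $\A$, and its $\E^1$-Kan comparison at level $n$ is a square whose factorisation to the pullback is formed in $\A$ from the level-$n$ horn objects of its codomain and domain. I would therefore realise the given square as such a comparison.

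Concretely, I would view the split epimorphism of extensions as a split epimorphism in $\Ext\A$ and, using that $\A$---and hence, by Proposition~\ref{Proposition-Extensions}, also $(\Ext\A,\E^{1})$---has simplicial kernels, extend it by coskeleton (the right adjoint to truncation) to an honest simplicial object $\BB$ in $\Ext\A$. The four given sections supply the degeneracies in low degrees and the coskeleton forces all remaining simplicial identities, so that the target square appears as the $\E^{1}$-Kan comparison of $\BB$ at the relevant level.

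To feed in the hypothesis, which concerns only simplicial objects in $\A$, I would note that the simplicial objects in $\A$ arising when $\BB$ is unfolded through the correspondence of diagram~\eqref{Truncation} and Lemma~\ref{Lemma-Two-arrs} are all $\E$-Kan by hypothesis. Using Proposition~\ref{Kan-as-Extension} together with the fact that horn objects in $\Ext\A$ are computed as pullbacks in $\A$, and that $(\Ext\A,\E^{1})$ itself satisfies (E1)--(E3) (Proposition~\ref{Proposition-Extensions}), one would assemble the $\E^{1}$-Kan fillers of $\BB$ from this $\A$-level Kan data; the symmetry of higher extensions (Remark~\ref{Remark-Cubes} and Proposition~\ref{Proposition-DIP-Extension}) guarantees that the property does not depend on the chosen direction. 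Reading off the comparison then gives $\langle a,f_1\rangle\in\E$, hence a double extension, hence (E5).

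The hard part is precisely this transfer of the Kan hypothesis from $\A$ to the relevant object of $\Ext\A$, carried out \emph{without} the ``go up'' Proposition~\ref{(E1)--(E5)-go-up}, which is unavailable here because it presupposes (E5). Unlike in Proposition~\ref{Proposition-Relative-Maltsev-Converse}, one cannot invoke that $(\Ext\A,\E^{1})$ already satisfies (E4) and (E5); instead one must build the $\E^{1}$-Kan fillers of $\BB$ by hand out of the $\E$-Kan fillers supplied in $\A$, the pullback description of horn objects in $\Ext\A$, and the symmetry of cubes. Arranging the coskeletal object so that its horn comparison is \emph{exactly} $\langle a,f_1\rangle$---which is what forces the passage to $\Ext\A$, because of the pullback $A_0\times_{B_0}B_1$---and then finishing with a short diagram chase using (E2)--(E4) and the given sections, is where the real work lies.
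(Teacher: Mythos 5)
You have identified the right target (producing $\langle a,f_1\rangle\in\E$ and using the equivalences of Proposition~\ref{Proposition-(E5)}), but your plan has a genuine gap at exactly the point you flag as ``the hard part'', and that gap is not a technical detail: it is the whole theorem, and as set up it is circular. A simplicial object $\BB$ in $\Ext\A$ is a morphism $\mathbb{X}\to\mathbb{Y}$ of simplicial objects in $\A$ with components in $\E$; it is \emph{not} in general of the form $\del\colon{\AA^{-}\to\AA}$, so the correspondence of diagram~\eqref{Truncation} does not ``unfold'' it into a single simplicial object of $\A$ to which the hypothesis applies. What the hypothesis does give is that $\mathbb{X}$ and $\mathbb{Y}$ are each $\E$-Kan, i.e.\ that certain \emph{morphisms} of $\A$ lie in $\E$; but $\E^{1}$-Kan-ness of $\BB$ asserts that certain \emph{squares} with components in $\E$ are double extensions, and upgrading the first kind of statement to the second is precisely what (E5) says and precisely what is to be proved. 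This is why Proposition~\ref{Proposition-Relative-Maltsev-Converse} could move between $\A$ and $\Ext\A$: it had (E5), hence Proposition~\ref{(E1)--(E5)-go-up}, available, and you correctly note that you do not. In addition, your coskeleton step presupposes that $\Ext\A$ has simplicial kernels; Proposition~\ref{Proposition-Extensions} only gives (E1)--(E3) for $(\Ext\A,\E^{1})$, and in the paper such limits in $\Ext\A$ are only obtained under resolution-type hypotheses (Lemma~\ref{Lemma-Simplicial-Kernel}).

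Moreover, the observation guiding your construction---that the pullback $A_0\times_{B_0}B_1$ can only be reached as a horn object after passing to $\Ext\A$---is refuted by the paper's own proof, which never leaves $\A$. One first reduces, via Proposition~\ref{Proposition-(E5)}(iii), to a split epimorphism of split epimorphisms; then one \emph{defines} $A_{1}$ as the pullback of $\langle a,f\rangle\colon{A_{0}\to A'\times_{B'}B}$ along the extension $a\times_{1_{B'}}f\colon{A_{0}\times_{A_{-1}}A_{0}\to A'\times_{B'}B}$, where $A_{-1}=B'$ and $\del_{0}=f'\comp a$. The splittings furnish degeneracies, giving a truncated contractible augmented $\E$-simplicial object, which is completed by successive simplicial kernels (these exist in $\A$ by hypothesis). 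By assumption this simplicial object is $\E$-Kan, so by Proposition~\ref{Proposition-Contractible-plus-Kan-is-Resolution} it is an $\E$-resolution; in particular $\langle\del_{0},\del_{1}\rangle\colon{A_{1}\to K_{1}\AA=A_{0}\times_{A_{-1}}A_{0}}$ is in $\E$. Since $\langle a,f\rangle\comp p=(a\times_{1_{B'}}f)\comp\langle\del_{0},\del_{1}\rangle$ with the right-hand side in $\E$ by (E3), axiom (E4) yields $\langle a,f\rangle\in\E$. So the pullback is never exhibited as a horn object: it is reached by cancellation, and this is the idea your proposal is missing.
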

\begin{proof}
We have to prove that every split epimorphism of split epimorphisms in $\A$ is a double extension. We may reduce the situation to a (truncated) contractible augmented $\E$-simplicial object
\addtocounter{equation}{2}
\begin{equation}\label{Trunc}
\xymatrix{A_{1} \ar@<-2.25ex>[r]_-{\del_{1}} \ar@<.75ex>[r]|-{\del_{0}} & A_{0} \ar@<-.75ex>[r]_-{\del_{0}} \ar@<.75ex>[l]|-{\sigma_{0}} \ar@<-2.25ex>[l]_-{\sigma_{-1}} & A_{-1}. \ar@<-.75ex>[l]_-{\sigma_{-1}}}
\end{equation}
Consider the following split epimorphism of split epimorphisms (any of the four possible squares commutes, and the arrows pointing down or right are the split epimorphisms).
\begin{equation}\label{Split-Epi-of-Split-Epis}
\vcenter{\xymatrix@=3em{A \ar@<-.5ex>[r]_-{f} \ar@<-.5ex>[d]_-{a} & B \ar@<-.5ex>[d]_-{b} \ar@<-.5ex>[l]_-{\overline{f}}\\
A' \ar@<-.5ex>[u]_-{\overline{a}} \ar@<-.5ex>[r]_-{f'} & B' \ar@<-.5ex>[u]_-{\overline{b}} \ar@<-.5ex>[l]_-{\overline{f'}}}}
\end{equation}
Write $A_{-1}=B'$ and $A_{0}=A$,
\[
\del_{0}=f'\comp a=b\comp f\colon{A_{0}\to A_{-1}}
\]
and $\sigma_{-1}=\overline{a}\comp\overline{f'}=\overline{f}\comp \overline{b}\colon{A_{-1}\to A_{0}}$; then already $\del_{0}\comp \sigma_{-1}=1_{A_{-1}}$. Now consider the extension $a\times_{1_{B'}}f$, which is defined by pulling back the double extension $( f'\comp a,f')\colon{a\to 1_{B'}}$ along the double extension $(f'\comp a, b)\colon{f\to 1_{B'}}$. Hence we can form the following pullback, which defines the morphisms $\del_{0}$ and $\del_{1}\colon{A_{1}\to A_{0}}$.
\[
\xymatrix@=3em{A_{1} \ar@{}[rd]|<{\pullback} \ar[r]^-{p} \ar[d]_-{\langle \del_{0},\del_{1}\rangle } & A_{0} \ar[d]^-{\langle a,f\rangle }\\
A_{0}\times_{A_{-1}}A_{0} \ar[r]_-{a\times_{1_{B'}} f} & A'\times_{B'}B}
\]
We see that
\[
\del_{0}\comp\del_{0}=f'\comp a\comp\del_{0}=f'\comp a\comp p=b\comp f\comp p=b\comp f\comp\del_{1}=\del_{0}\comp\del_{1}.
\] 
Write $\sigma_{0}\colon{A_{0}\to A_{1}}$ for the arrow universally induced by the equality
\[
( a\times_{1_{B'}} f)\comp\langle 1_{A_{0}},1_{A_{0}}\rangle =\langle a,f\rangle \comp 1_{A_{0}};
\]
then $\del_{0}\comp \sigma_{0}=\del_{1}\comp \sigma_{0}=1_{A_{0}}$. Finally, let $\sigma_{-1}\colon{A_{0}\to A_{1}}$ be the arrow universally induced by the equality
\[
( a\times_{1_{B'}} f)\comp\langle 1_{A_{0}},\overline{a}\comp\overline{f'}\comp f'\comp a\rangle =\langle a,f\rangle \comp (\overline{a}\comp a).
\]
Then $\del_{0}\comp \sigma_{-1}=1_{A_{0}}$ and $\del_{1}\comp\sigma_{-1}=\overline{a}\comp\overline{f'}\comp f'\comp a=\sigma_{-1}\comp\del_{0}$. As both $\del_0$ and $\del_1$ are split epimorphisms, they are in $\E$.

The diagram~\eqref{Trunc} thus defined can be extended to a contractible augmented $\E$-simplicial object $\AA$ by constructing successive simplicial kernels, which exist by assumption. This contractible augmented $\E$-simplicial object is $\E$-Kan, so by Proposition~\ref{Proposition-Contractible-plus-Kan-is-Resolution} it is an $\E$-resolution. In particular, the induced comparison morphism $\langle\del_{0},\del_{1}\rangle\colon{A_{1}\to K_{1}\AA}$ is in $\E$. Using (E4) on the square defining~$\langle\del_0,\del_1\rangle$, we see that $\langle a, f\rangle$ is also in $\E$, which means that the split epimorphism of split epimorphisms~\eqref{Split-Epi-of-Split-Epis} is a double extension. This proves that~$(\A,\E)$ satisfies~(E5).
\end{proof}

\begin{theorem}\label{Theorem-Relative-Maltsev}
Let $\A$ be a category with simplicial kernels and $\E$ a class of morphisms in $\A$ satisfying (E1)--(E4). Then the following are equivalent:
\begin{enumerate}
\item[$\cdot$] $(\A,\E)$ satisfies (E5);
\item[$\cdot$] every quasi-simplicial object in $\A$ is $\E$-Kan;
\item[$\cdot$] every simplicial object in $\A$ is $\E$-Kan.\noproof
\end{enumerate}
\end{theorem}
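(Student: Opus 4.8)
The plan is to prove the three-way equivalence as a cycle of implications, assembling the two preceding propositions together with one trivial observation. Write (i) for the statement that $(\A,\E)$ satisfies (E5), (ii) for the statement that every quasi-simplicial object in $\A$ is $\E$-Kan, and (iii) for the statement that every simplicial object in $\A$ is $\E$-Kan. I aim to establish the cycle (i) \implies\ (ii) \implies\ (iii) \implies\ (i), after which all three are equivalent.

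First, the implication (i) \implies\ (ii) is precisely the content of Proposition~\ref{Proposition-Relative-Maltsev-Converse}, so nothing new is required there. Dually, (iii) \implies\ (i) is precisely Proposition~\ref{Proposition-Relative-Maltsev}; for this one the standing hypothesis that $\A$ has simplicial kernels is exactly what is needed (it is what lets the truncated contractible augmented $\E$-simplicial object constructed there be extended to a full one), and this hypothesis is available by the assumptions of the theorem. The remaining implication (ii) \implies\ (iii) is immediate from the definitions: a simplicial object is by definition a quasi-simplicial object satisfying the additional degeneracy relations $\sigma_{i}\comp\sigma_{j}=\sigma_{j+1}\comp\sigma_{i}$ for $i\leq j$, so forgetting those extra relations exhibits every simplicial object as a quasi-simplicial object. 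Since the $\E$-Kan property depends only on the faces and the induced horn objects (Definition~\ref{Definition-Kan}), an object that is $\E$-Kan as a quasi-simplicial object is $\E$-Kan as a simplicial object; hence (ii) forces (iii).

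I do not expect a genuine obstacle here, since all the real work has been carried out in Propositions~\ref{Proposition-Relative-Maltsev-Converse} and~\ref{Proposition-Relative-Maltsev}, and the theorem merely packages them. The one point deserving a moment's attention is the bookkeeping of hypotheses: Proposition~\ref{Proposition-Relative-Maltsev-Converse} is invoked under assumption (i), where all of (E1)--(E5) are available, whereas Proposition~\ref{Proposition-Relative-Maltsev} needs only (E1)--(E4) together with simplicial kernels, both of which are standing hypotheses of the theorem. It is worth noting that the genuinely informative content of the cycle is (iii) \implies\ (i): the a priori weakest condition (Kan only for honest simplicial objects) already suffices to recover (E5), and thereby, through the cycle, the stronger-looking quasi-simplicial version. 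With the hypotheses lined up as above the cycle closes and the three conditions coincide.
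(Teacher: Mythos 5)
Your proposal is correct and matches the paper's intended argument exactly: the theorem is stated with no proof precisely because it packages Proposition~\ref{Proposition-Relative-Maltsev-Converse} (giving (E5) \implies\ quasi-simplicial $\E$-Kan), the trivial observation that every simplicial object is a quasi-simplicial object and that the $\E$-Kan property depends only on faces and horns, and Proposition~\ref{Proposition-Relative-Maltsev} (giving simplicial $\E$-Kan \implies\ (E5), using the standing hypothesis of simplicial kernels). Your bookkeeping of the hypotheses is also accurate, so nothing is missing.
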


\subsection*{Some examples}
We start with a prototypical example: regular Mal'tsev categories.

\begin{example}[Regular Mal'tsev categories]\label{Example-Relative-Maltsev-Absolute}
It is shown in \cite{Bourn2003} that when $\A$ is finitely complete with coequalisers of effective equivalence relations and $\E$ is the class of regular epimorphisms, the pair $(\A,\E)$ satisfies (E1)--(E5) if and only if~$\A$ is regular Mal'tsev. Alternatively, this result follows from the above together with~\cite[Theorem~4.2]{Carboni-Kelly-Pedicchio}.

More generally, when $\A$ is finitely complete, it was shown in~\cite{Bourn1996} that $\A$ is Mal'tsev (i.e., every reflexive relation in $\A$ is an equivalence relation) if and only if Condition~(iii) of Proposition~\ref{Proposition-(E5)} holds for $\E$ the class of strong (=~extremal) epimorphisms. Given a pair $(\A,\E)$ which satisfies (E1)--(E5), this implies that~$\A$ is Mal'tsev as soon as $\E$ is contained in the class of strong epimorphisms.
\end{example}

\begin{example}[Higher extensions]
Proposition~\ref{(E1)--(E5)-go-up} implies that $(\Ext\A,\E^1)$, the category of extensions (= regular epimorphisms) in a regular Mal'tsev category~$\A$ together with the double extensions, also satisfies the axioms (E1)--(E5), as do all other $(\Extn\A,\E^n)$.
\end{example}

\begin{example}[Naturally Mal'tsev categories]\label{Example-Naturally-Maltsev}
By a result in~\cite{Bourn1996}, a category is \defn{naturally Mal'tsev}~\cite{Johnstone:Maltsev} when, given a split epimorphism of split epimorphisms as in Diagram~\eqref{Double-Split-Epi}, if it is a (downward) pullback of split epimorphisms, then it is an (upward) pushout of split monomorphisms. If now $\A$ is a naturally Mal'tsev category and $\E$ is its class of split epimorphisms, then it is easily seen that Condition~(iii) in Proposition~\ref{Proposition-(E5)} holds. It is then obvious that $(\A,\E)$ satisfies (E1)--(E5). However, the opposite implication does not hold. For instance, the dual of the category of pointed sets is a regular Mal'tsev category in which every regular epimorphism is split but which is not naturally Mal'tsev. 
\end{example}

Now we give two examples where $\E$ need not be contained in the class of regular epimorphisms of $\A$.

\begin{example}[Weakly Mal'tsev categories]\label{Example-Weakly-Maltsev}
A category is said to be \defn{weakly Mal'tsev}~\cite{NMF1} when it has pullbacks of split epimorphisms and the following property holds: in any split epimorphism of split epimorphisms such as Diagram~\eqref{Double-Split-Epi} which is a (downwards) pullback, the splittings $\overline{a}$ and $\overline{f_{1}}$ are jointly epic.  

Let $\A$ be a category and $\E$ a class of epimorphisms in $\A$ such that the axioms (E1)--(E5) hold. Then $\A$ is weakly Mal'tsev as soon as $\A$ has either pushouts of split monomorphisms or equalisers. Indeed, in the first case, consider the diagram
\[
\vcenter{\xymatrix@=3em{P \ar@{.>}[rd] \ar@/_2ex/@<-.5ex>[rdd] \ar@/^2ex/@<.5ex>[rrd] \\
&A_{1} \ar@{}[rd]|<<{\pullback} \ar@<-.5ex>[r]_-{f_{1}} \ar@<-.5ex>[d]_-{a} & B_1 \ar@/_2ex/@<.5ex>[llu]^-{\widetilde{f_{0}}} \ar@<-.5ex>[d]_-{b} \ar@<-.5ex>[l]_-{\overline{f_{1}}}\\
&A_{0} \ar@/^2ex/@<-.5ex>[luu]_-{\widetilde{b}} \ar@<-.5ex>[u]_-{\overline{a}} \ar@<-.5ex>[r]_-{f_{0}} & B_{0} \ar@<-.5ex>[u]_-{\overline{b}} \ar@<-.5ex>[l]_-{\overline{f_{0}}}}}
\]
in which the square is a pullback of $f_{0}$ and $b$ and $P$ is a pushout of $\overline{f_{0}}$ and~$\overline{b}$. Then~$\widetilde{b}$ and $\widetilde{f_{0}}$ are jointly (strongly) epic, and by Proposition~\ref{Proposition-(E5)}  the dotted comparison morphism is also an epimorphism. It follows that the splittings $\overline{a}$ and~$\overline{f_{1}}$ in the pullback are jointly epic. 

In the second case, given two parallel morphisms which coequalise $\overline{a}$ and~$\overline{f_{1}}$, their equaliser ${P\to A_{1}}$ induces a diagram such as above. Then this morphism is both epic and regular monic, so that the two given parallel morphisms are equal to each other.

Conversely, for any pair $(\A,\E)$, where $\A$ is a weakly Mal'tsev category and~$\E$ is the class of all epimorphims, the conditions (E1), (E3) and (E4) hold, but for~(E2) we need epimorphisms in~$\A$ to be pullback-stable. In this case Proposition~\ref{Proposition-(E5)} tells us that $(\A,\E)$ satisfies (E5). A concrete situation where this occurs is given in Example~\ref{Example-Inverse-Image}.
\end{example}

\begin{example}[All morphisms as extensions]
For any category with pull\-backs, a trivial example is obtained by taking $\E$ to be the class of all morphisms.
\end{example}

The following two examples satisfy a stronger axiom, cf.~\cite{Bourn2001,Tomasthesis,EverHopf,Tamar_Janelidze}. 
\begin{enumerate}
\item[(E5\plus)] Given a diagram in $\A$
\[
\xymatrix{0 \ar[r] & \K[a] \ar[r] \ar[d]_-k & A_1 \ar[r]^-{a} \ar[d]_{f} & A_{0} \ar@{=}[d] \ar[r] & 0\\
0 \ar[r] & \K[b] \ar[r] & B \ar[r]_-{b} & A_{0} \ar[r] & 0}
\]
with short exact rows and $a$ and $b$ in $\E$, if $k\in\E$ then also $f\in\E$.
\end{enumerate}
Notice that Axiom (E2) ensures the existence of kernels of extensions. Axiom (E5\plus) implies (E5): consider a split epimorphism of extensions as in (E5). Take kernels of $a$ and $b$ to obtain a split epimorphism of short $\E$-exact sequences:
\[
\xymatrix@C=4em{0 \ar[r] & \K[a] \ar[r]^{\ker a} \ar@<-.5ex>[d]_-k & A_1 \ar[r]^{a} \ar@<-.5ex>[d]_{f_{1}} & A_{0} \ar@<-.5ex>[d]_{f_{0}} \ar[r] & 0\\
0 \ar[r] & \K[b] \ar@<-.5ex>[u] \ar[r]_{\ker b} & B_{1} \ar@<-.5ex>[u] \ar[r]_{b} & B_0 \ar@<-.5ex>[u] \ar[r] & 0}
\]
As $k$ is a split epimorphism and thus in $\E$, (E5\plus) implies that the right hand square is a double extension.

\begin{example}[Topological groups 1]\label{Topological-Groups}
Example~\ref{Example-Topological-Groups} of topological groups and morphisms split in the category of topological spaces satisfies (E1)--(E4), as commented earlier. This example also satisfies the axiom (E5\plus) and hence~(E5). Consider a diagram in $\Gp\Top$ as in (E5\plus), and assume that in $\Top$ the morphism~$k$ is split by a continuous map $u\colon{\K[b]\to \K[a]}$, $a$ is split by $s$ and~$b$ is split by $t=f \comp s$. Any element~$\beta$ in the domain of an extension $b\colon{B\to A_{0}}$ can be written as a product of an element~$\kappa$ of the kernel~$K[b]$ with an element $tb(\beta)$ in the image of the splitting~$t$, because $\beta=\beta\cdot (tb(\beta))^{-1}\cdot tb(\beta)$. We show that the morphism $f\colon{A_1\to B}$ is also split in~$\Top$. A splitting ${B\to A_1}$ is given by the composite
\[
\xymatrix@!0@C=13em@R=2em{B \ar[r] & \K[b]\times A_{0} \ar[r] & A_1\\
\beta \ar@{|->}[r] & (\beta\cdot (tb(\beta))^{-1},b(\beta)) \ar@{|->}[r] & u(\beta\cdot (tb(\beta))^{-1})\cdot sb(\beta)}
\]
which is easily seen to be continuous.
\end{example}

\begin{example}[Rings and modules]
The category of rings together with morphisms split in abelian groups and the category of $R$-modules with morphisms split in $\Ab$ also satisfy the axioms (E1)--(E4) and (E5\plus), and thus~(E5). 
\end{example}

Let~$\A$ be a category with pullbacks, $(\B,\F)$ a pair which satisfies (E1)--(E5) and $U\colon{\A\to \B}$ a pullback-preserving functor. Then the class of morphisms in~$\A$ given by $\E=U^{-1}\F$ gives a pair $(\A,\E)$ which also satisfies (E1)--(E5). The following examples are instances of this situation.

\begin{example}[Topological groups 2]
Using the above, the category of topological groups may be equipped with another class of extensions, different from the one considered in Example~\ref{Topological-Groups}, but such that (E1)--(E5) still hold: let $U$ be the forgetful functor ${\Gp\Top\to \Gp}$ and take $\E=U^{-1}\F$ with $\F$ the class of all regular epimorphisms in $\Gp$.
\end{example}

\begin{example}[Reflective subcategories]
Another instance of this occurs when the functor~$U$ is the inclusion of a reflective subcategory; hence any class of extensions satisfying (E1)--(E5) restricts to any reflective subcategory where it still satisfies (E1)--(E5).
\end{example}

\begin{example}[Weakly Mal'tsev but not Mal'tsev]\label{Example-Inverse-Image}
Finally let $\A$ be the category of sets equipped simultaneously with a group structure and a topology, and morphisms which are continuous group homomorphisms. (We are not assuming any compatibility between the group structure and the topology as in the case of $\Gp\Top$.) Consider the forgetful functor to $\Gp$; then the class of extensions $\E$ induced by the regular epimorphisms of groups, i.e., the continuous surjective homomorphisms in $\A$, satisfies the conditions (E1)--(E5). On the other hand, $\A$ is not a Mal'tsev category in the absolute sense (though it is weakly Mal'tsev). The regular epimorphisms in $\A$ are in particular quotients (inducing the final topology on the codomain) so that not every extension is a regular epi. As a counterexample to the absolute Mal'tsev property, consider the group of integers $\ZZ$ with the indiscrete topology. Then~$\ZZ\times\ZZ$ also carries the indiscrete topology, while $\ZZ+\ZZ$ carries the final topology for the (algebraic) coproduct inclusions. Now the universally induced comparison morphism ${\ZZ+\ZZ\to \ZZ\times \ZZ}$ to the pullback in the split epimorphism of regular epimorphisms
\[
 \xymatrix{\ZZ+\ZZ \ar[r] \ar[d] & \ZZ \ar[d]\\
 \ZZ \ar[r] & 0}
\]
is not a regular epimorphism, as the topology on $\ZZ\times \ZZ$ is different from the induced quotient topology. To see this, it suffices to note that the single\-ton~$\{(1,1)\}$ is not open in $\ZZ\times \ZZ$, whereas its inverse image along ${\ZZ+\ZZ\to \ZZ\times \ZZ}$ is open in~$\ZZ+ \ZZ$.
\end{example}

\subsection*{Acknowledgements}
We would like to thank Tamar Janelidze for fruitful discussion on the subject of this paper, Zurab Janelidze for the counterexample in Example~\ref{Example-Naturally-Maltsev}, and the DPMMS at the University of Cambridge for its kind hospitality during our stay.

%\bibliography{tim}
%\bibliographystyle{amsplain}

\end{document}